\newcommand{\ball}{\bm{\mathrm{B}}}
\newcommand{\dif}{\mathrm{d}}
\newcommand{\I}{\bm{\mathrm{I}}}
\newtheorem{theorem}{Theorem}
\newtheorem{lemma}{Lemma}
\theoremstyle{definition}
\newtheorem{remark}{Remark}
\newtheorem{definition}{Definition}
\numberwithin{equation}{section}
\begin{document}

\title{Semilocal Convergence Behavior of Halley's Method Using
Kantorovich's Majorants Principle }
\author{Yonghui Ling\,$^\textup{a,}$\footnote{Corresponding author.
\newline \indent
    {\it E-mail:} lingyinghui@163.com (Y. Ling),
    xxu@zjnu.cn (X. Xu).}\ , Xiubin
Xu$^\textup{b,}$\footnote{The second author's work was supported in
part by the National Natural Science Foundation of China (Grant No.
61170109 and No. 10971194).}
\\\small\it
\textup{a} Department of Mathematics, Zhejiang University, Hangzhou
310027, China
\\\small\it
\textup{b} Department of Mathematics, Zhejiang Normal University,
Jinhua 321004, China }

\date{}
\maketitle

\begin{center}
\begin{minipage}{135mm}
\textbf{Abstract:} The present paper is concerned with the semilocal
convergence problems of Halley's method for solving nonlinear
operator equation in Banach space. Under some so-called majorant
conditions, a new semilocal convergence analysis for Halley's method
is presented. This analysis enables us to drop out the assumption of
existence of a second root for the majorizing function, but still
guarantee Q-cubic convergence rate. Moreover, a new error estimate
based on a directional derivative of the twice derivative of the
majorizing function is also obtained. This analysis also allows us
to obtain two important special cases about the convergence results
based on the premises of Kantorovich and Smale types.
\par
\textbf{Keywords:} Halley's Method; Majorant Condition; Majorizing
Function; Majorizing Sequence;  Kantorovich-type Convergence
Criterion; Smale-type Convergence Criterion\\
\par
{\noindent \bf  Subject Classification:} 47J05, 65J15, 65H10.
\end{minipage}
\end{center}

\section{Introduction}
\label{section:Introduction}

In this paper, we concern with the numerical approximation of the
solution $x$ of the nonlinear equation
\begin{equation}
\label{eq:NonlinearOperatorEquation} F(x) = 0,
\end{equation}
where $F$ is a given nonlinear operator which maps from some
nonempty open convex subset $D$ in a Banach space $X$ to another
Banach space $Y$. Newton's method with initial point $x_0$ is
defined by
\begin{equation}
\label{iteration:NewtonMethod}
    x_{k+1} = x_k - F'(x_k)^{-1}F(x_k), \ \ \ k = 0,1,2,\ldots,
\end{equation}
which is the most efficient method known for solving such an
equation. One of the famous results on Newton's method
(\ref{iteration:NewtonMethod}) is the well-known Kantorovich theorem
\cite{Kantorvich1982}, which guarantees convergence of that method
to a solution using semilocal conditions. It does not require a
priori existence of a solution, proving instead the existence of the
solution and its uniqueness on some region. Another important result
concerning Newton's method (\ref{iteration:NewtonMethod}) is Smale's
point estimate theory \cite{Smale1986}. It assumes that the
nonlinear operator is analytic at the initial point.

Since then, Kantorovich like theorem has been the subject of many
new researches, see for example, \cite{GraggTapia1974,
Deuflhard1979, Ypma1982, Gutierrez2000, XuLi2007,XuLi2008}. For
Smale's point estimate theory, Wang and Han in \cite{WangHan1997}
discussed $\alpha$ criteria under some weak condition and
generalized this theory. In particular, Wang in \cite{Wang1999}
introduced some weak Lipschitz conditions called Lipschitz
conditions with L-average, under which Kantorovich like convergence
criteria and Smale's point estimate theory can be investigated
together.

Recently, Ferreira and Svaiter \cite{Ferreira2009a} presented a new
convergence analysis for Kantorovich's theorem which makes clear,
with respect to Newton's method (\ref{iteration:NewtonMethod}), the
relationship of the majorizing function $h$ and the nonlinear
operator $F$ under consideration. Specifically, they studied the
semilocal convergence of Newton's method
(\ref{iteration:NewtonMethod}) under the following majorant
conditions:
$$
\|F'(x_0)^{-1}[F'(y) - F'(x)]\| \leq h'(\|y - x\| + \|x - x_0\|) -
h'(\|x - x_0\|), \ \ x,y \in \ball(x_0,R), R > 0,
$$
where $\|y - x\| + \|x - x_0\| < R$ and $h:[0,R) \to \mathbb{R}$ is
a continuously differentiable, convex and strictly increasing
function and satisfies $h(0) > 0, h'(0) = - 1$, and has zero in
$(0,R)$. This convergence analysis relaxes the assumptions for
guaranteeing Q-quadratic convergence (see Definition
\ref{definition:Q-orderConvergence}) of Newton's method
(\ref{iteration:NewtonMethod}) and obtains a new estimate of the
Q-quadratic convergence. This analysis was also introduced in
\cite{Ferreira2009b} studing the local convergence of Newton's
method.

Halley's method in Banach space denoted by
\begin{equation}
\label{iteration:HalleyMethod} x_{k+1} = x_k - [\I -
L_F(x_k)]^{-1}F'(x_k)^{-1}F(x_k), \ \ k = 0,1,2,\ldots,
\end{equation}
where operator $L_F(x)=\frac{1}{2}F'(x)^{-1}F''(x)F'(x)^{-1}F(x)$,
is another famous iteration for solving nonlinear equation
(\ref{eq:NonlinearOperatorEquation}). The results concerning
convergence of this method with its modification have recently been
studied under the assumptions of Newton-Kantorovich type, see for
example, \cite{Candela1990a, HanWang1997, Argyros2004, YeLi2006,
Ezquerro2005, Argyros2012}. Besides, there are also some researches
concerned with Smale-type convergence for Halley's method
(\ref{iteration:HalleyMethod}), if the nonlinear operator $F$ is
analytic at the initial point, see for example,
\cite{WangHan1990,Wang1997,Han2001}.

Motivated by the ideas of Ferreira and Svaiter in
\cite{Ferreira2009a}, in the rest of this paper, we study the
semilocal convergence of Halley's method
(\ref{iteration:HalleyMethod}) under some so-called majorant
conditions.

Suppose that $F$ is a twice Fr\'{e}chet differentiable operator and
there exists $x_0 \in D$ such that $F'(x_0)$ is nonsingular. In
addition, let $R > 0$ and $h : [0,R) \to \mathbb{R}$ be a twice
continuously differentiable function. We say the operator $F''$
satisfies the majorant conditions, if
\begin{equation}
\label{condition:MajorantCondition} \|F'(x_0)^{-1}[F''(y) -
F''(x)]\| \leq h''(\|y - x\| + \|x - x_0\|) - h''(\|x - x_0\|), \ \
x,y \in \ball(x_0,R),
\end{equation}
where $\|y - x\| + \|x - x_0\| < R$ and the following assumptions
hold:
\begin{enumerate}
\item[\textup{(A1)}]
$h(0) > 0, h''(0) > 0, h'(0) = -1.$
\item[\textup{(A2)}]
$h''$ is convex and strictly increasing in $[0,R)$.
\item[\textup{(A3)}]
$h$ has zero(s) in $(0,R)$. Assume that $t^*$ is the smallest zero
and $h'(t^*) < 0.$
\end{enumerate}

Under the assumptions that the second derivative of $F$ satisfies
the majorant conditions, we establish a semilocal convergence for
Halley's method (\ref{iteration:HalleyMethod}). In our convergence
analysis, the assumptions for guaranteeing Q-cubic convergence of
Halley's method (\ref{iteration:HalleyMethod}) are relaxed. In
addition, we obtain a new error estimate based on a directional
twice derivative of the derivative of the majorizing function. We
drop out the assumption of existence of a second root for the
majorizing function, still guaranteeing Q-cubic convergence.
Moreover, the majorizing function even do not need to be defined
beyond its first root. In particular, this convergence analysis
allows us to obtain some important special cases, which includes
Kantorovich-type convergence results under Lipschitz conditions and
Smale-type convergence results under the $\gamma$-condition (see
Definition \ref{definition:GammaCondition}).

The rest of this paper is organized as follows. In Section 2, we
introduce some preliminary notions and properties of the majorizing
function. In Section 3, we study the majorizing function and the
results regarding only the majorizing sequence. The main results
about the semilocal convergence and new error estimate are stated
and proved in Section 4. In Section 5, we present two special cases
of our main results. And finally in Section 6, some remarks and
numerical example are offered.

\section{Preliminaries}
\label{section:Preliminaries}

Let $X$ and $Y$ be Banach spaces. For $x \in X$ and a positive
number $r$, throughout the whole paper, we use $\ball(x,r)$ to stand
for the open ball with radius $r$ and center $x$, and let
$\overline{\ball(x,r)}$ denote its closure.

Throughout this paper, for a convergent sequence $\{x_n\}$ in $X$,
we use the notion of Q-order of convergence (see \cite{Jay2001} or
\cite{Potra1989} for more details).

\begin{definition}
\label{definition:Q-orderConvergence} A sequence $\{x_n\}$ converges
to $x^*$ with Q-order (at least) $q \geq 1$ if there exist two
constants $c \geq 0$ and $N \geq 0$ such that for all $n \geq N$ we
have
$$
\|x^* - x_{n+1}\| \leq c\|x^* - x_n\|^q.
$$
For $q = 2,3$ the convergence is said to be (at least) Q-quadratic
and Q-cubic, respectively.
\end{definition}

The notions about Lipschitz condition (see
\cite{Wang2000,Deuflhard2004}) and the $\gamma$-condition (see
\cite{WangHan1997}) are defined as follows.

\begin{definition}[Lipschitz Condition]
\label{definition:LipschitzCondition} The condition on the operator
$F$
$$
\|F(x) - F(y)\| \leq L\|x - y\|,\ \ \ x,y\in D
$$
is usually called the Lipschitz condition in the domain $D$ with
constant $L$. If it is only required to satisfy
$$
\|F(x) - F(x_0)\| \leq L\|x - x_0\|,\ \ \ x \in \ball(x_0,r),
$$
we call it the center Lipschitz condition in the ball
$\ball(x_0,r)$. In particular, if $F'(x_0)^{-1}F'$ satisfies the
Lipschitz condition, i.e.
$$
\|F'(x_0)^{-1}[F'(x) - F'(y)]\| \leq L\|x - y\|,\ \ \ x,y \in
\ball(x_0,r),
$$
we call it the affine covariant Lipschitz condition. The
corresponding center Lipschitz condition is referred to as affine
covariant center Lipschitz condition.
\end{definition}

\begin{definition}[$\gamma$-Condition]
\label{definition:GammaCondition} Let $F:D\subset X \to Y$ be a
nonlinear operator with thrice continuously differentiable, $D$ open
and convex. Suppose $x_0 \in D$ is a given point, and let $0 < r
\leq 1/\gamma$ be such that $\ball(x_0,r) \subset D$. $F$ is said to
satisfy the $\gamma$-condition (with 1-order) on $\ball(x_0,r)$ if
$$
\|F'(x_0)^{-1}F''(x)\| \leq \frac{2\gamma}{(1 - \gamma\|x -
x_0\|)^3}.
$$
$F$ is said to satisfy the $\gamma$-condition with 2-order on
$\ball(x_0,r)$ if the following relation holds:
$$
\|F'(x_0)^{-1}F'''(x)\| \leq \frac{6\gamma^2}{(1 - \gamma\|x -
x_0\|)^4}.
$$
\end{definition}

For the convergence analysis, we need the following useful lemmas
about elementary convex analysis. The first one is slightly modified
from the one in \cite{Ferreira2009b}.
\begin{lemma}
\label{lemma:ConvexFunctionProperties1} Let $R > 0$. If $g : [0,R)
\to \mathbb{R}$ is continuously differentiable and convex, then
\begin{enumerate}
\item[\textup{(i)}]
$(1 - \theta)g'(\theta t) \leq \cfrac{g(t) - g(\theta t)}{t} \leq (1
- \theta)g'(t)$ for all $t \in (0,R)$ and $0 \leq \theta \leq 1$.
\item[\textup{(ii)}]
$\cfrac{g(u) - g(\theta u)}{u} \leq \cfrac{g(v) - g(\theta v)}{v}$
for all $u,v \in [0,R),\ u < v$ and $0 \leq \theta \leq 1$.
\end{enumerate}
\end{lemma}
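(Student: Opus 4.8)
The plan is to reduce both statements to the single most basic fact about a convex differentiable function, namely that its derivative is nondecreasing and that the graph lies above each of its tangent lines (equivalently, below each of its chords). For part~(i), fix $t \in (0,R)$ and $\theta \in [0,1]$, and consider the three points $\theta t \le s \le t$. Writing $g(t) - g(\theta t)$ as the integral $\int_{\theta t}^{t} g'(s)\,\dif s$ over an interval of length $(1-\theta)t$, I would bound $g'(s)$ below by $g'(\theta t)$ and above by $g'(t)$ using monotonicity of $g'$, then divide by $t$. (The degenerate cases $\theta = 1$, or $\theta=0$, or $t$ near the endpoint, are immediate; the integral form is legitimate since $g'$ is continuous on $[0,R)$.) This yields
\[
(1-\theta)g'(\theta t) \;\le\; \frac{g(t)-g(\theta t)}{t} \;\le\; (1-\theta)g'(t),
\]
which is exactly~(i).

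For part~(ii), the key observation is that the middle quantity in~(i), namely $\varphi(t) := \dfrac{g(t)-g(\theta t)}{t}$, is itself nondecreasing in $t$ for each fixed $\theta$; once that is established, $u < v$ gives $\varphi(u) \le \varphi(v)$, which is~(ii). To see the monotonicity of $\varphi$, I would either differentiate $\varphi$ and use convexity of $g$, or—cleaner and avoiding a second differentiation—chain together the two inequalities of~(i): for $u < v$ one has $\varphi(u) \le (1-\theta)g'(u) \le (1-\theta)g'(v)$ is not quite enough, so instead I would apply~(i) with the rescaling $\theta' = u/v \in (0,1)$ at the point $v$, combined with a telescoping/nesting of the chord slopes. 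Concretely, writing the slope of the chord of $g$ over $[a,b]$ as $\sigma(a,b) = (g(b)-g(a))/(b-a)$, convexity says $\sigma$ is nondecreasing in each argument, and $\varphi(t)$ is a convex combination (with weights depending only on $\theta$) of $\sigma(\theta t, t)$-type slopes; monotonicity of each such slope in $t$ then transfers to $\varphi$.

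I expect the main obstacle to be purely organizational rather than mathematical: part~(ii) must be proved in a way that does not implicitly assume more smoothness of $g$ than ``continuously differentiable and convex'' (so no appeal to $g''$), and one has to handle the case $u = 0$ in the statement's range $u,v \in [0,R)$, where the quotient $\bigl(g(u)-g(\theta u)\bigr)/u$ is a $0/0$ expression that must be interpreted as its limit $(1-\theta)\lim_{u\to 0^+}\sigma(\theta u,u)$; by~(i) this limit is squeezed between $(1-\theta)g'(0)$ and itself, so it equals $(1-\theta)g'(0)$, and the inequality extends by continuity. Everything else is a direct consequence of the monotonicity of the chord-slope function of a convex function, so no delicate estimate is needed.
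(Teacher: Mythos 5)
The paper does not actually prove this lemma --- it only remarks that the statement is ``slightly modified'' from \cite{Ferreira2009b} and cites it --- so there is no in-paper argument to compare against; judged on its own, your proposal is correct in substance. Part (i) is exactly right: $g(t)-g(\theta t)=\int_{\theta t}^{t}g'(s)\,\dif s$ over an interval of length $(1-\theta)t$, and monotonicity of $g'$ (convexity plus $C^1$) gives both bounds after dividing by $t$. For part (ii) your wording is looser than it needs to be: $\varphi(t)=\bigl(g(t)-g(\theta t)\bigr)/t$ is not a convex combination of chord slopes but is \emph{exactly} $(1-\theta)\,\sigma(\theta t,t)$, where $\sigma(a,b)=\bigl(g(b)-g(a)\bigr)/(b-a)$; since the slope function of a convex map is nondecreasing in each argument (three-chord lemma), $u<v$ gives $\sigma(\theta u,u)\leq\sigma(\theta u,v)\leq\sigma(\theta v,v)$, and (ii) follows at once --- no rescaling $\theta'=u/v$ or telescoping is needed. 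An even shorter route, which also unifies (i) and (ii), is the substitution $s=\tau t$ in your integral: $\varphi(t)=\int_{\theta}^{1}g'(\tau t)\,\dif\tau$, so (i) is the pointwise bound $g'(\theta t)\leq g'(\tau t)\leq g'(t)$ and (ii) is the pointwise monotonicity $g'(\tau u)\leq g'(\tau v)$; this representation also shows directly that the $u=0$ case of the statement can only be read as the limiting value $(1-\theta)g'(0)$, exactly as you observe. So the only issues are presentational (the ``convex combination'' phrasing and the vague ``telescoping/nesting''), not mathematical.
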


\begin{lemma}[\cite{Ferreira2009a}]
\label{lemma:ConvexFunctionProperties2} Let $I \subset \mathbb{R}$
be an interval and $g : I \to \mathbb{R}$ be convex. Then
\begin{enumerate}
\item[\textup{(i)}]
For any $u_0 \in \textup{int}(I)$, there exists $(in \ \mathbb{R})$
\begin{equation}
\label{definition:DirectionalDerivative}
 D^-g(u_0) := \lim_{u \to u_0^-} \frac{g(u_0) - g(u)}{u_0 - u} =
\sup_{u < u_0} \frac{g(u_0) - g(u)}{u_0 - u}.
\end{equation}
\item[\textup{(ii)}]
If $u, v, w \in I$ and $u \leq v \leq w$, then
$$
g(v) - g(u) \leq [g(w) - g(u)]\frac{v - u}{w - u}.
$$
\end{enumerate}
\end{lemma}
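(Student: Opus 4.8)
The plan is to derive both parts from the single elementary fact that the chords of a convex function have nondecreasing slopes, proving (ii) first and then reading (i) off from it. For (ii): if $u=w$ then $u=v=w$ and both sides vanish, so assume $u<w$ and put $\lambda:=(v-u)/(w-u)\in[0,1]$, so that $v=(1-\lambda)u+\lambda w$. The definition of convexity gives $g(v)\le(1-\lambda)g(u)+\lambda g(w)$; subtracting $g(u)$ from both sides and substituting $\lambda$ back produces $g(v)-g(u)\le\frac{v-u}{w-u}\bigl(g(w)-g(u)\bigr)$, which is the assertion. Note this step uses no interiority hypothesis, so it is valid on the general interval $I$.

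For (i), fix $u_0\in\textup{int}(I)$ and define the chord-slope $s(u):=\dfrac{g(u_0)-g(u)}{u_0-u}$ for $u\in I$ with $u<u_0$. First I would show $s$ is nondecreasing on $\{u\in I:u<u_0\}$: for $u_1<u_2<u_0$ apply part (ii) to the triple $(u_1,u_2,u_0)$, clear the denominator, and rearrange; a short manipulation turns the resulting inequality into $s(u_1)\le s(u_2)$. Next I would produce a uniform upper bound for $s$: since $u_0$ is interior, choose $w\in I$ with $w>u_0$ and apply part (ii) to the triple $(u,u_0,w)$. Dividing that inequality by $u_0-u>0$ gives $s(u)\le\frac{g(w)-g(u)}{w-u}$, while rearranging the very same inequality in a different way gives $\frac{g(w)-g(u)}{w-u}\le\frac{g(w)-g(u_0)}{w-u_0}$; hence $s(u)\le\frac{g(w)-g(u_0)}{w-u_0}$ for every $u<u_0$. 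Since $s$ is nondecreasing and bounded above, $s(u)$ converges, as $u\to u_0^-$, to $\sup_{u<u_0}s(u)\in\mathbb{R}$, which is exactly the claim.

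I do not expect a genuine obstacle: everything is bookkeeping with chord slopes. The only things needing attention are the degenerate cases in (ii), the observation that (ii) requires no interiority (so that it may legitimately be invoked inside the proof of (i)), and the fact that the interior hypothesis in (i) is used exactly once — to furnish a point $w>u_0$ of $I$ giving the finite upper bound, in the absence of which $\sup_{u<u_0}s(u)$ could be $+\infty$.
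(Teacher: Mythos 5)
Your proof is correct. Note that the paper itself contains no proof of this lemma---it is quoted from \cite{Ferreira2009a} as a classical fact of convex analysis---and your chord-slope argument (convexity gives (ii) at once; (ii) then yields monotonicity of the left difference quotients $s(u)=\frac{g(u_0)-g(u)}{u_0-u}$ together with the finite upper bound $\frac{g(w)-g(u_0)}{w-u_0}$ obtained from an interior point $w>u_0$, so the left derivative exists finitely and equals the supremum) is precisely the standard derivation, with the interiority hypothesis used in exactly the right place. One cosmetic point: in the degenerate case $u=w$ the right-hand side of (ii) is of the form $0\cdot\frac{0}{0}$, so rather than saying ``both sides vanish'' it is cleaner to read (ii) with the implicit assumption $u<w$, which is all you use afterwards.
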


For the convenience of analysis, we define the majorizing function
with respect to Halley's method (\ref{iteration:HalleyMethod}) as
follows.

\begin{definition}
\label{definition:MajorizingFunction} Let $F: D \subset X \to Y$ be
a twice continuously differentiable nonlinear operator. For a given
guess $x_0 \in D$, we assume $F'(x_0)$ is nonsingular. A
continuously twice differentiable function $h: [0,R) \to \mathbb{R}$
is said to be a majorizing function to $F$ at $x_0$, if $F''$
satisfies the majorant conditions in $\ball(x_0,R) \subset D$ and
the following initial conditions:
\begin{equation}
\label{condition:InitialCondition} \|F'(x_0)^{-1}F(x_0)\| \leq
h(0),\ \|F'(x_0)^{-1}F''(x_0)\| \leq h''(0).
\end{equation}
\end{definition}

The following lemma describes some basic properties of the
majorizing function $h$.

\begin{lemma}
\label{lemma:MajorizingFunctionProperties} Let $R > 0$ and let $h :
[0,R) \to \mathbb{R}$ be a twice continuously differentiable
function which satisfies assumptions $(A1)-(A3)$. Then
\begin{enumerate}
\item[\textup{(i)}]
$h'$ is strictly convex and strictly increasing on $[0,R)$.
\item[\textup{(ii)}]
$h$ is strictly convex on $[0,R)$, $h(t) > 0$ for $t \in [0,t^*)$
and equation $h(t) = 0$ has at most one root on $(t^*,R)$.
\item[\textup{(iii)}]
$-1 < h'(t) < 0$ for $t \in (0,t^*)$.
\end{enumerate}
\end{lemma}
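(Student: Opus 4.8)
The plan is to derive all three items from the assumptions $(A1)$–$(A3)$ together with the elementary convexity facts in Lemmas~\ref{lemma:ConvexFunctionProperties1} and~\ref{lemma:ConvexFunctionProperties2}. For item~(i), I would argue that since $h''$ is convex and strictly increasing on $[0,R)$ by $(A2)$, its antiderivative $h'$ is automatically strictly convex (a function whose derivative is strictly increasing is strictly convex) and strictly increasing (its derivative $h''$ is positive on $[0,R)$ because $h''(0) > 0$ by $(A1)$ and $h''$ is increasing). This is essentially immediate and serves as the foundation for the rest.

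For item~(ii), strict convexity of $h$ follows the same way from item~(i): $h'$ is strictly increasing, hence $h$ is strictly convex on $[0,R)$. The positivity $h(t) > 0$ on $[0,t^*)$ uses that $t^*$ is the \emph{smallest} zero of $h$ in $(0,R)$ by $(A3)$, that $h(0) > 0$ by $(A1)$, and continuity: if $h$ vanished or went negative somewhere in $(0,t^*)$ it would, by the intermediate value theorem applied on $[0,\cdot]$, produce a zero strictly smaller than $t^*$, a contradiction. The "at most one root on $(t^*,R)$" claim is where strict convexity does the real work: a strictly convex function can have at most two zeros on an interval, and since it already has the zero $t^*$, it can have at most one more; I would phrase this by noting that on $(t^*,R)$ the function $h$ cannot vanish twice, for between two zeros a strictly convex function is strictly negative and just after the second zero it is strictly increasing and positive, so a third zero to the right is impossible, and a zero in $(t^*,R)$ together with $t^*$ already uses up the budget.

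For item~(iii), the bound $h'(t) < 0$ on $(0,t^*)$ comes from strict convexity of $h$ and the fact that $h(t^*) = 0$ while $h(t) > 0$ for $t \in [0,t^*)$ by item~(ii): if $h'(t) \geq 0$ for some $t \in (0,t^*)$, then by monotonicity of $h'$ (item~(i)) we would have $h' \geq 0$ on $[t,t^*)$, forcing $h$ to be nondecreasing there, which contradicts $h(t) > 0 = h(t^*)$. The lower bound $-1 < h'(t)$ follows from $h'(0) = -1$ in $(A1)$ and the \emph{strict} increase of $h'$ on $[0,R)$ from item~(i): $h'(t) > h'(0) = -1$ for every $t \in (0,t^*)$.

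The only mildly delicate point is the "at most one root on $(t^*,R)$" statement in item~(ii); everything else is a direct unwinding of convexity. I expect that part to be handled cleanly once strict convexity of $h$ is in hand, using that a strictly convex real function on an interval has at most two zeros, so no genuine obstacle is anticipated.
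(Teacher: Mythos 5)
Your proof is correct and follows essentially the same route as the paper: item (i) from (A2) together with $h''(0)>0$, strict convexity of $h$ from (i), positivity of $h$ on $[0,t^*)$ via the minimality of $t^*$, at most one further root from strict convexity, and the bound $h'(t)>-1$ from strict monotonicity of $h'$ and $h'(0)=-1$. The only cosmetic difference is in showing $h'(t)<0$: the paper invokes the secant inequality $h'(t) < \bigl(h(t^*)-h(t)\bigr)/(t^*-t)$ from Lemma~\ref{lemma:ConvexFunctionProperties1}, whereas you argue by contradiction from the monotonicity of $h'$ and $h(t)>0=h(t^*)$; both are valid elementary convexity arguments.
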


\begin{proof}
(i) follows from assumption (A2) and $h''(0) > 0$ in (A1). (i)
implies that $h$ is strictly convex. As assumption (A1), (i) and
$h(t^*) = 0$, we know that $h(t) = 0$ has at most one root on
$(t^*,R)$. Since $h(t^*) = 0$ and $h(0) > 0$, one has $h(t) > 0$ for
$t \in [0,t^*)$. It remains to show (iii). Firstly, since $h$ is
strictly convex, we obtain from Lemma
\ref{lemma:ConvexFunctionProperties1} that
$$
h'(t) < \frac{h(t^*) - h(t)}{t^* - t},\ \ \ t \in [0,t^*).
$$
This implies $0 = h(t^*) > h(t) + h'(t)(t^* - t)$. In view of $h(t)
> 0$ in $[0,t^*)$, we get $h'(t) < 0$. Secondly, as $h'$ is strictly
increasing and $h'(0) = -1$, we have $h'(t) > -1$ for $t \in
(0,t^*)$. This completes the proof.
\end{proof}

\section{Halley's Method Applied to the Majorizing Function}
\label{section:HalleyMethodAppliedToTheMajorizingFunction}

Let
\begin{equation}
\label{function:BanachSpaceIterativeFunctionHalleyMethod} H_F(x) :=
x - [\I - L_F(x)]^{-1}F'(x)^{-1}F(x)
\end{equation}
be the iterative function of Halley's method, where
$L_F(x)=\frac{1}{2}F'(x)^{-1}F''(x)F'(x)^{-1}F(x)$. Suppose $h$ is
the majorizing function to $F$ (see Definition
\ref{definition:MajorizingFunction}). Then Halley's method applied
to $h$ can be denoted as
\begin{equation}
\label{function:RealSpaceIterativeFunctionHalleyMethod} H_h(t) := t
- \frac{1}{1 - L_h(t)}\cdot\frac{h(t)}{h'(t)}, \ \ \ t \in [0,R),
\end{equation}
where $L_h(t) = h(t)h''(t)/(2h'(t)^2)$. In order to obtain the
convergence of the majorizing sequence generated by applying
Halley's method to the marjorizing function, we need some useful
lemmas.

\begin{lemma}
\label{lemma:estimate_Lh(t)} Let $h : [0,R) \to \mathbb{R}$ be a
twice continuously differentiable function and satisfy assumptions
$(A1)-(A3)$. Then we have $0 \leq L_h (t) \leq 1/4$ for $t \in
[0,t^*]$.
\end{lemma}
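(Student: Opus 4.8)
The plan is to show the two bounds separately. The lower bound $L_h(t)\ge 0$ is immediate: on $[0,t^*)$ we have $h(t)>0$ by Lemma \ref{lemma:MajorizingFunctionProperties}(ii), while $h''(t)>0$ since $h''$ is strictly increasing with $h''(0)>0$ by (A1)--(A2), and $h'(t)^2>0$; hence $L_h(t)=h(t)h''(t)/(2h'(t)^2)\ge 0$, with equality forced only at $t=t^*$ where $h(t^*)=0$. (At $t^*$ we need $h'(t^*)\ne 0$, which is exactly assumption (A3).)

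For the upper bound I would fix $t\in[0,t^*)$ and aim to prove $2h(t)h''(t)\le h'(t)^2$. Consider the auxiliary function $\varphi(s):=h'(s)^2-2h(s)h''(s)$ on $[t,t^*]$. At the right endpoint $\varphi(t^*)=h'(t^*)^2>0$ since $h(t^*)=0$, so it suffices to show $\varphi$ is nonincreasing on $[t,t^*]$, i.e. $\varphi'(s)\le 0$. Differentiating, $\varphi'(s)=2h'(s)h''(s)-2h'(s)h''(s)-2h(s)h'''(s)=-2h(s)h'''(s)$. Here is where the obstacle lies: $h$ is only assumed twice continuously differentiable, so $h'''$ need not exist. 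I would get around this by replacing $h'''$ with the right-hand directional derivative $D^-h''$ (or the analogous one-sided derivative) guaranteed by the convexity of $h''$ via Lemma \ref{lemma:ConvexFunctionProperties2}(i): since $h''$ is convex and increasing, $h''$ is nondecreasing, hence $D^-h''\ge 0$, and one can show $\varphi$ has a nonpositive left (or right) derivative everywhere on $[t,t^*)$ equal to $-2h(s)D^-h''(s)\le 0$ because $h(s)\ge 0$. A monotone function with nonpositive one-sided derivative is nonincreasing (this needs the standard fact that a continuous function with $D^-\le 0$ throughout an interval is nonincreasing), giving $\varphi(t)\ge\varphi(t^*)>0$, i.e. $L_h(t)\le 1/2$ — wait, that only gives $1/2$, not $1/4$.

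So the bound $1/4$ must use more. I would instead bound $h(t)$ itself more sharply. Using strict convexity of $h$ and Lemma \ref{lemma:ConvexFunctionProperties1}(i) with $g=h$, $\theta=0$: $h(t)-h(0)\le t\,h'(t)$ is false in sign (since $h'(t)<0$); rather $(1-\theta)h'(\theta t)\le (h(t)-h(\theta t))/t$. Instead, the sharp route is: since $h'$ is convex (Lemma \ref{lemma:MajorizingFunctionProperties}(i)) and $h'(t^*)<0$, write $h(t)=h(t)-h(t^*)=-\int_t^{t^*}h'(s)\,\dif s\le -h'(t)(t^*-t)$ and separately estimate $t^*-t$ by integrating the lower bound $h'(s)\ge h'(t)+h''(t)(s-t)$... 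Actually the cleanest is: from $-h'(t)=h'(t^*)-h'(t)-h'(t^*)=\int_t^{t^*}h''(s)\,\dif s - h'(t^*)\ge h''(t)(t^*-t)$ (using $h''$ increasing and $h'(t^*)<0$), we get $t^*-t\le -h'(t)/h''(t)$; combined with $h(t)\le -h'(t)(t^*-t)$ this yields $h(t)\le h'(t)^2/h''(t)$, hence $L_h(t)=h(t)h''(t)/(2h'(t)^2)\le 1/2$ again. The factor of $2$ improvement to reach $1/4$ comes from not wasting the term $-h'(t^*)>0$: I would keep it, obtaining $h(t)\le -h'(t)(t^*-t)$ and $(t^*-t)\,h''(t)\le -h'(t)+h'(t^*)\le -h'(t)$, but to squeeze out $1/4$ I would instead estimate $h(t)$ by $\tfrac12$ times the Newton-type quantity: since $h'$ is increasing, $h(t)-h(t^*)=-\int_t^{t^*}h'(s)\,\dif s\le -\tfrac12 h'(t)(t^*-t) - \tfrac12 h'(t^*)(t^*-t)\le -\tfrac12 h'(t)(t^*-t)$ by the trapezoidal-type bound for the integral of the convex increasing function $h'$ together with $h'(t^*)\le 0$; plugging in $t^*-t\le -h'(t)/h''(t)$ gives $h(t)\le \tfrac12 h'(t)^2/h''(t)$, i.e. $L_h(t)\le 1/4$. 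The main obstacle, as flagged, is handling the missing third derivative and assembling the correct sharp convexity estimate for $\int_t^{t^*}h'$; once the trapezoidal inequality for the convex function $h'$ is in hand together with $t^*-t\le -h'(t)/h''(t)$, the bound $L_h(t)\le 1/4$ on $[0,t^*)$ follows, and continuity extends it to $t=t^*$.
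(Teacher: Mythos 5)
Your lower bound and your preliminary estimate $L_h(t)\le 1/2$ (via $h(t)\le -h'(t)(t^*-t)$ and $t^*-t\le -h'(t)/h''(t)$) are fine, but the sharpening to $1/4$ rests on the ``trapezoidal-type bound'' $h(t)=-\int_t^{t^*}h'(s)\,\dif s\le -\tfrac12\bigl(h'(t)+h'(t^*)\bigr)(t^*-t)$, and this inequality goes the wrong way. Since $h''$ is increasing, $h'$ is \emph{convex}, and for a convex integrand the trapezoidal rule \emph{overestimates} the integral: $\int_t^{t^*}h'(s)\,\dif s\le \tfrac{t^*-t}{2}\bigl(h'(t)+h'(t^*)\bigr)$, which gives a \emph{lower} bound on $h(t)$, not the upper bound you need. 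The claimed inequality is in fact false in general: take $h(t)=\beta-t+\tfrac{\eta}{2}t^2+t^3$ with $\eta$ small and $\beta$ chosen so that $t^*=0.3$ (so $\beta\approx 0.273$); at $t=0$ one has $h(0)\approx 0.273$, while $-\tfrac12\bigl(h'(0)+h'(t^*)\bigr)t^*\approx 0.26$ and a fortiori $-\tfrac12 h'(0)\,t^*=0.15$. So the final step of your argument collapses, and the proof as written only establishes $L_h(t)\le 1/2$.

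Ironically, the first approach you abandoned is correct and already yields $1/4$: with $\varphi(s)=h'(s)^2-2h(s)h''(s)$, the one-sided derivative argument gives $D^-\varphi(s)=-2h(s)D^-h''(s)\le 0$ on $[t,t^*]$, hence $\varphi(t)\ge \varphi(t^*)=h'(t^*)^2\ge 0$, i.e.\ $h'(t)^2\ge 2h(t)h''(t)$; since $L_h(t)=h(t)h''(t)/\bigl(2h'(t)^2\bigr)$, this is exactly $L_h(t)\le 1/4$ --- you concluded only $1/2$ by overlooking the factor $2$ already present in the denominator of $L_h$. The paper reaches the same key inequality by a different, derivative-free device: it considers the quadratic $\phi(s)=h(t)+h'(t)(s-t)+\tfrac12 h''(t)(s-t)^2$ on $[t,t^*]$, shows $\phi(t)=h(t)>0$ and $\phi(t^*)\le h(t^*)=0$ using Taylor's formula with integral remainder and the monotonicity of $h''$, so $\phi$ has a real root and its discriminant $h'(t)^2-2h(t)h''(t)$ must be nonnegative. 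Either route closes the gap without any trapezoidal estimate and without needing $h'''$.
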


\begin{proof}
Define function
$$
\phi(s) = h(t) + h'(t)(s - t) + \frac{1}{2}h''(t)(s - t)^2,\ \ \ s
\in [t,t^*].
$$
Then, by Lemma \ref{lemma:MajorizingFunctionProperties} (ii),
$\phi(t) = h(t) > 0$ for $t \in [0,t^*)$. In addition, we have
\begin{equation}
\label{eq:Phi(t*)} \phi(t^*) = h(t) + h'(t)(t^* - t) +
\frac{1}{2}h''(t)(t^* - t)^2.
\end{equation}
By using Taylor's formula, one has that
\begin{equation}
\label{eq:h(t*)} h(t^*) = h(t) + h'(t)(t^* - t) +
\frac{1}{2}h''(t)(t^* - t)^2 + \int_0^1 (1 - \tau)[h''(t + \tau(t^*
- t)) - h''(t)](t^* - t)^2 \, \textup{d}\tau.
\end{equation}
In view of $h(t^*) = 0$ and $h''$ is increasing, it follows from
(\ref{eq:Phi(t*)}) and (\ref{eq:h(t*)}) that $ \phi(t^*) \leq 0$.
Thus, there exists a real root of $\phi(s)$ in $[t,t^*]$. So the
discriminant of $\phi(s)$ is greater than or equal to 0, i.e.,
$h'(t)^2 - 2h''(t)h(t) \geq 0$, which is equivalent to $0 \leq
h''(t)h(t)/h'(t)^2 \leq 1/2$. Therefore, $0 \leq L_h (t) \leq 1/4$
for $t \in [0,t^*]$. The proof is complete.
\end{proof}

\begin{lemma}
\label{lemma:estimate_Hh(t)} Let $h : [0,R) \to \mathbb{R}$ be a
twice continuously differentiable function and satisfy assumptions
$(A1)-(A3)$. Then, for all $t \in [0,t^*)$, $t < H_h(t) < t^*$.
Moreover, $h'(t^*) < 0$ if and only if there exists $t \in (t^*,R)$
such that $h(t) < 0$.
\end{lemma}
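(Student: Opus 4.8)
I would prove the two assertions in turn. For the first, the plan is to rewrite the Halley map on $[0,t^*)$ in the rational form
$$H_h(t)=t-\frac{2h(t)h'(t)}{2h'(t)^2-h(t)h''(t)},$$
which is legitimate because Lemma~\ref{lemma:MajorizingFunctionProperties}(iii) gives $h'(t)<0$ there, Lemma~\ref{lemma:MajorizingFunctionProperties}(ii) gives $h(t)>0$, and Lemma~\ref{lemma:estimate_Lh(t)} gives $0\le L_h(t)\le 1/4$, whence $2h'(t)^2-h(t)h''(t)=2h'(t)^2\bigl(1-L_h(t)\bigr)\ge\tfrac32 h'(t)^2>0$. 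The numerator $-2h(t)h'(t)$ is then positive, so $H_h(t)-t>0$, which is the left inequality $t<H_h(t)$.

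For the right inequality $H_h(t)<t^*$ I would reuse the quadratic Taylor estimate behind Lemma~\ref{lemma:estimate_Lh(t)}: by \eqref{eq:h(t*)} and the monotonicity of $h''$ from assumption (A2), the remainder integral is nonnegative, so with $d:=t^*-t>0$ one has $h(t)+h'(t)d+\tfrac12 h''(t)d^2\le h(t^*)=0$. Clearing the positive denominator $2h'(t)^2-h(t)h''(t)$, the inequality $H_h(t)<t^*$ is equivalent to
$$N:=2dh'(t)^2+h(t)\bigl(2h'(t)-dh''(t)\bigr)>0.$$
Since $h'(t)<0<h''(t)$, the coefficient $2h'(t)-dh''(t)$ of $h(t)$ is negative, so replacing $h(t)$ by the larger quantity $-h'(t)d-\tfrac12 h''(t)d^2$ only decreases $N$; after the resulting terms cancel, this leaves $N\ge\tfrac12 h''(t)^2 d^3>0$. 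This short but slightly delicate sign-and-cancellation argument is the technical core of the lemma and the step I expect to require the most care.

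For the second assertion I would argue both implications directly. If $h'(t^*)<0$, then since $h(t^*)=0$ the difference quotient $\bigl(h(t)-h(t^*)\bigr)/(t-t^*)$ tends to $h'(t^*)<0$ as $t\downarrow t^*$, so $h(t)<0$ for $t\in(t^*,R)$ sufficiently close to $t^*$ (such points exist because $t^*<R$). Conversely, if $h'(t^*)\ge 0$, then $h'$ being strictly increasing on $[0,R)$ by Lemma~\ref{lemma:MajorizingFunctionProperties}(i) forces $h'(t)>0$ for all $t\in(t^*,R)$, hence $h$ is strictly increasing there and $h(t)>h(t^*)=0$; this contradicts the existence of a point in $(t^*,R)$ at which $h$ is negative. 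Combining the two implications gives the stated equivalence, which completes the proof.
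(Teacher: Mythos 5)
Your proof is correct, but for the key inequality $H_h(t)<t^*$ you take a genuinely different route from the paper. The paper proves it dynamically: it computes the one-sided derivative $D^-H_h(t)$ of the Halley iteration function, shows $D^-H_h(t)>0$ on $(0,t^*]$ using the convexity of $h''$ (so that $D^-h''(t)>0$), and then concludes $H_h(t)<H_h(t^*)=t^*$ from the monotonicity of $H_h$ together with the fixed-point relation at $t^*$ (which uses $h(t^*)=0$ and $h'(t^*)<0$ so that $H_h$ is defined there). You instead argue algebraically: you reuse the Taylor estimate $h(t)+h'(t)d+\tfrac12h''(t)d^2\le h(t^*)=0$ (the same inequality underlying Lemma~\ref{lemma:estimate_Lh(t)}), clear the positive denominator $2h'(t)^2-h(t)h''(t)$, and bound the resulting expression below by $\tfrac12h''(t)^2d^3>0$; I checked the cancellation and it is exact, using $h''(t)\ge h''(0)>0$ from (A1)--(A2). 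Your version avoids differentiating $H_h$ and does not need $H_h$ to be defined or monotone up to $t^*$ (sidestepping the mild strictness issue at $t=t^*$, where the numerator of $D^-H_h$ vanishes), and it even yields a quantitative lower bound $t^*-H_h(t)\cdot$(denominator)$\,\ge\tfrac12h''(t)^2(t^*-t)^3$; the paper's version is shorter once the derivative formula is accepted and additionally records the monotonicity of $H_h$, which is of independent interest. For the equivalence in the second assertion, your forward direction (difference quotient) matches the paper's ``obvious'' step, and your converse via the contrapositive ($h'(t^*)\ge0$ plus $h'$ strictly increasing forces $h>0$ on $(t^*,R)$) is an equally valid substitute for the paper's supporting-line inequality $h(t)>h(t^*)+h'(t^*)(t-t^*)$ from strict convexity.
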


\begin{proof}
For $t \in [0,t^*)$, since $h(t) > 0$, $-1 < h'(t) < 0$ (from Lemma
\ref{lemma:MajorizingFunctionProperties}) and $0 \leq L_h (t) \leq
1/4$ (from Lemma \ref{lemma:estimate_Lh(t)}), one has that $t < H_h
(t)$. Furthermore, for any $t \in (0,t^*]$, it follows from the
definition of directional derivative
(\ref{definition:DirectionalDerivative}) and assumption (A2) that
$D^- h''(t)
> 0$. Thus, we have
\begin{equation*}
D^- H_h (t) = \frac{h(t)^2[3h''(t)^2 - 2 h'(t)D^-
h''(t)]}{[h(t)h''(t) - 2 h'(t)^2]^2} > 0,\ \ t \in (0,t^*].
\end{equation*}
This implies that $H_h(t) < H_h(t^*) = t^*$ for any $t \in (0,t^*)$.
So the first part of this Lemma is shown. For the second part, if
$h'(t^*) < 0$, then it is obvious that there exists $t \in (t^*,R)$
such that $h(t) < 0$. Conversely, noting that $h(t^*) = 0$, by Lemma
\ref{lemma:ConvexFunctionProperties1}, we have $h(t) > h(t^*) +
h'(t^*)(t - t^*)$ for $t \in (t^*,R)$, which implies $h'(t^*) < 0$.
This completes the proof.
\end{proof}

\begin{remark}
\label{remark:h'(t*)} The condition $h'(t^*) < 0$ in (A3) implies
the following properties:
\begin{enumerate}
\item[\textup{(a)}]
$h(t^{**}) = 0$ for some $t^{**} \in (t^*,R)$.
\item[\textup{(b)}]
$h(t) < 0$ for some $t \in (t^*,R)$.
\end{enumerate}
\end{remark}

In the usual versions of Kantorovich-type and Smale-type theorems
for Halley's method (e.g., \cite{HanWang1997,Han2001}), in order to
guarantee Q-cubic convergence, condition (a) is used. As we
discussed, this condition is more restrictive than condition
$h'(t^*) < 0 $ in assumption (A3).

\begin{lemma}
\label{lemma:estimate_t*-Hh(t)} Let $h : [0,R) \to \mathbb{R}$ be a
twice continuously differentiable function and satisfy assumptions
$(A1)-(A3)$. Then
\begin{equation}
\label{estimate:t*-Hh(t)} t^* - H_h(t) \leq \left[\frac{1}{3}
\frac{h''(t^*)^2}{h'(t^*)^2} + \frac{2}{9}
\frac{D^-h''(t^*)}{-h'(t^*)}\right](t^* - t)^3,\ \ \ t \in [0,t^*).
\end{equation}
\end{lemma}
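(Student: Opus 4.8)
The plan is to derive an explicit formula for $t^*-H_h(t)$ and then estimate it using the properties of $h$ collected above. Since $L_h(t)=h(t)h''(t)/(2h'(t)^2)$, one has $1-L_h(t)=\bigl[2h'(t)^2-h(t)h''(t)\bigr]/(2h'(t)^2)$, and Lemma~\ref{lemma:estimate_Lh(t)} guarantees this denominator is positive, so that
\begin{equation*}
H_h(t)=t-\frac{2h(t)h'(t)}{2h'(t)^2-h(t)h''(t)},\qquad
t^*-H_h(t)=\frac{(t^*-t)\bigl[2h'(t)^2-h(t)h''(t)\bigr]+2h(t)h'(t)}{2h'(t)^2-h(t)h''(t)}.
\end{equation*}
I would then feed in the equality $h(t^*)=0$ via Taylor's formula at $t$ with integral remainder; writing $s:=t^*-t$,
\begin{equation*}
0=h(t^*)=h(t)+h'(t)s+\tfrac12 h''(t)s^2+E(t),\qquad
E(t):=s^2\int_0^1(1-\tau)\bigl[h''(t+\tau s)-h''(t)\bigr]\,\textup{d}\tau\geq 0,
\end{equation*}
the nonnegativity of $E(t)$ coming from $h''$ being increasing. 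Substituting $h(t)=-h'(t)s-\tfrac12 h''(t)s^2-E(t)$ into the numerator above and simplifying, it collapses to $\tfrac12 h''(t)^2s^3+\bigl(h''(t)s-2h'(t)\bigr)E(t)$; I regard this as a sum of two blocks, $\bigl[\tfrac12 h''(t)^2s^3+h''(t)s\,E(t)\bigr]$ and $\bigl[-2h'(t)E(t)\bigr]$, and estimate the contribution of each (divided by the denominator) separately.

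For the denominator, $2h'(t)^2-h(t)h''(t)=2h'(t)^2\bigl(1-L_h(t)\bigr)\ge\tfrac32 h'(t)^2\ge\tfrac32 h'(t^*)^2$ by Lemma~\ref{lemma:estimate_Lh(t)} together with the fact that $h'$ is increasing with $h'(t^*)<0$ (so $|h'(t)|\ge|h'(t^*)|$ on $[0,t^*]$); also $0<h''(t)\le h''(t^*)$ on $[0,t^*]$ since $h''$ is increasing and $h''(0)>0$. For the remainder, Lemma~\ref{lemma:ConvexFunctionProperties2}(ii) applied to $h''$ with $t\le t+\tau s\le t^*$ gives $h''(t+\tau s)-h''(t)\le\tau\bigl(h''(t^*)-h''(t)\bigr)$, whence
\begin{equation*}
E(t)\le\tfrac16\bigl(h''(t^*)-h''(t)\bigr)s^2\le\tfrac16\,D^-h''(t^*)\,s^3 ,
\end{equation*}
the last inequality being $h''(t^*)-h''(t)\le D^-h''(t^*)(t^*-t)$ from Lemma~\ref{lemma:ConvexFunctionProperties2}(i).

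The crucial observation is that the first block absorbs its remainder term at no cost: $\tfrac12 h''(t)^2s^3+h''(t)s\,E(t)\le\tfrac12 h''(t^*)^2s^3$. Indeed, after using $E(t)\le\tfrac16(h''(t^*)-h''(t))s^2$ this reduces to $\tfrac13 h''(t)\bigl(h''(t^*)-h''(t)\bigr)\le\bigl(h''(t^*)-h''(t)\bigr)\bigl(h''(t^*)+h''(t)\bigr)$, i.e.\ to $\tfrac13 h''(t)\le h''(t^*)+h''(t)$, which is immediate from $h''>0$ and $h''(t)\le h''(t^*)$. Dividing by the denominator, the first block contributes at most $h''(t^*)^2s^3/\bigl(3h'(t^*)^2\bigr)$, while the second contributes at most
\begin{equation*}
\frac{2|h'(t)|\,E(t)}{\tfrac32 h'(t)^2}=\frac{4E(t)}{3|h'(t)|}\le\frac{2\,D^-h''(t^*)}{9\,|h'(t)|}\,s^3\le\frac{2\,D^-h''(t^*)}{9\,(-h'(t^*))}\,s^3 .
\end{equation*}
Adding the two contributions gives exactly~(\ref{estimate:t*-Hh(t)}).

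The step I expect to demand the most care is the regrouping of the numerator: the cross term $h''(t)s\,E(t)$ has to be paired with $\tfrac12 h''(t)^2s^3$ (and swallowed into the coefficient $\tfrac13$ via the trivial bound $\tfrac13 h''(t)\le h''(t^*)+h''(t)$) rather than kept together with $-2h'(t)E(t)$; the cruder estimate that bounds $\bigl(h''(t)s-2h'(t)\bigr)E(t)$ as one block produces only the coefficient $\tfrac49$, not $\tfrac29$, in front of $D^-h''(t^*)/(-h'(t^*))$.
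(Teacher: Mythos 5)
Your proof is correct and is essentially the paper's own argument in disguise: after clearing the denominator, your two blocks $\tfrac12 h''(t)^2s^3+h''(t)s\,E(t)$ and $-2h'(t)E(t)$ are exactly the paper's two integral terms in its expansion of $t^*-H_h(t)$, and you bound them with the same ingredients (Lemma \ref{lemma:ConvexFunctionProperties2}, Lemma \ref{lemma:estimate_Lh(t)}, monotonicity of $h'$ and $h''$), arriving at the same constants $\tfrac13$ and $\tfrac29$.
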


\begin{proof}
By the definition of $H_h$ in
(\ref{function:RealSpaceIterativeFunctionHalleyMethod}), we may
derive the following relation
\begin{eqnarray*}
t^* - H_h(t) &=& \frac{1}{1 - L_h(t)} \left[(1 - L_h(t))(t^* - t) +
\frac{h(t)}{h'(t)}\right]\\
&=& - \frac{1}{h'(t)(1 - L_h(t))} \int_0^1 \big[h''(t + \tau(t^* -
t)) -
h''(t)\big](t^* - t)^2 (1 - \tau) \dif\tau\\
&& + \ \frac{t^* - t}{2((1 - L_h(t))} \frac{h''(t)}{h'(t)^2}\int_0^1
h''(t + \tau(t^* - t))(t^* - t)^2 (1 - \tau) \dif\tau.
\end{eqnarray*}
Since $h''$ is convex and $t < t^*$, it follows from Lemma
\ref{lemma:ConvexFunctionProperties2} (ii) that
$$
h''(t + \tau(t^* - t)) - h''(t) \leq [h''(t^*) -
h''(t)]\frac{\tau(t^* - t)}{t^* - t}.
$$
Then, noting that $h''$ is strictly increasing, we have
$$
t^* - H_h(t) \leq -\frac{h''(t^*) - h''(t)}{6h'(t)(1 - L_h(t))}(t^*
- t)^2 + \frac{h''(t^*)h''(t)}{4h'(t)^2(1 - L_h(t))}(t^* - t)^3.
$$
In view of the facts that $h'(t) < 0, h''(0) > 0$ and $h', h''$ are
strictly increasing on $[0,t^*)$ by Lemma
\ref{lemma:MajorizingFunctionProperties} and that $0 \leq L_h (t)
\leq 1/4$ for $t \in [0,t^*]$ by Lemma \ref{lemma:estimate_Lh(t)},
the preceding relation can be further reduced to
\begin{equation}
\label{ineq:estimatet*-Hh(t)} t^* - H_h(t) \leq
\frac{2}{9}\frac{h''(t^*) - h''(t)}{- h'(t)}(t^* - t)^2 +
\frac{1}{3} \frac{h''(t^*)^2}{h'(t^*)^2}(t^* - t)^3.
\end{equation}
As $h'$ is increasing, $h'(t^*) < 0$ and $h'(t) < 0$ in $[0,t^*)$,
we have
$$
\frac{h''(t^*) - h''(t)}{- h'(t)} \leq \frac{h''(t^*) - h''(t)}{-
h'(t^*)} = \frac{1}{- h'(t^*)}\frac{h''(t^*) - h''(t)}{t^* - t}(t^*
- t) \leq \frac{D^-h''(t^*)}{- h'(t^*)}(t^* - t),
$$
where the last inequality follows from Lemma
\ref{lemma:ConvexFunctionProperties2} (i). Combining the above
inequality with (\ref{ineq:estimatet*-Hh(t)}), we conclude that
(\ref{estimate:t*-Hh(t)}) holds. This completes the proof.
\end{proof}

By Definition \ref{definition:MajorizingFunction}, if $h$ is the
majorizing function to $F$ at $x_0$, then the results in Lemma
\ref{lemma:estimate_Lh(t)}, Lemma \ref{lemma:estimate_Hh(t)} and
Lemma \ref{lemma:estimate_t*-Hh(t)} also hold. Let $\{t_k\}$ denote
the majorizing sequence generated by
\begin{equation}
\label{majorizingsequence;tk} t_0 = 0,\ t_{k+1} = H_h(t_k) = t_k -
\frac{1}{1 - L_h(t_k)}\cdot\frac{h(t_k)}{h'(t_k)},\ \ \ k =
0,1,2,\ldots.
\end{equation}
Therefore, by using Lemma \ref{lemma:estimate_Hh(t)} and Lemma
\ref{lemma:estimate_t*-Hh(t)}, one concludes that

\begin{theorem}
\label{theorem:ConvergenceRealSpaceHalleyIterqtion} Let sequence
$\{t_k\}$ be defined by $(\ref{majorizingsequence;tk})$. Then
$\{t_k\}$ is well defined, strictly increasing and is contained in
$[0,t^*)$. Moreover, $\{t_k\}$ satisfies $(\ref{estimate:t*-Hh(t)})$
and converges to $t^*$ with Q-cubic.
\end{theorem}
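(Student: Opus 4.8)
The plan is to prove the four assertions about $\{t_k\}$ in sequence, using the earlier lemmas as the principal tools. First I would establish by induction on $k$ that $t_k$ is well defined and lies in $[0,t^*)$: the base case is $t_0 = 0 \in [0,t^*)$ by assumption (A1) and (A3); for the inductive step, assuming $t_k \in [0,t^*)$, Lemma \ref{lemma:estimate_Hh(t)} applies at the point $t_k$ and gives both that $H_h(t_k)$ is well defined (the denominator $1 - L_h(t_k)$ is nonzero since $0 \leq L_h(t_k) \leq 1/4$ by Lemma \ref{lemma:estimate_Lh(t)}, and $h'(t_k) \neq 0$ since $-1 < h'(t_k) < 0$ on $(0,t^*)$ by Lemma \ref{lemma:MajorizingFunctionProperties}(iii), with $h'(0) = -1 \neq 0$) and that $t_k < H_h(t_k) = t_{k+1} < t^*$. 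This simultaneously yields that $\{t_k\}$ is strictly increasing and contained in $[0,t^*)$.

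Next, since $\{t_k\}$ is increasing and bounded above by $t^*$, it converges to some limit $\bar{t} \leq t^*$. To identify $\bar{t}$ with $t^*$, I would pass to the limit in the recursion $t_{k+1} = H_h(t_k)$. The function $H_h$ is continuous on $[0,t^*)$, but care is needed at $\bar{t}$ if $\bar{t} = t^*$; the cleanest route is to use the estimate from Lemma \ref{lemma:estimate_t*-Hh(t)}, namely $t^* - t_{k+1} = t^* - H_h(t_k) \leq C (t^* - t_k)^3$ where $C = \tfrac{1}{3}\tfrac{h''(t^*)^2}{h'(t^*)^2} + \tfrac{2}{9}\tfrac{D^- h''(t^*)}{-h'(t^*)}$ is a finite nonnegative constant (finite because $D^- h''(t^*)$ exists in $\mathbb{R}$ by Lemma \ref{lemma:ConvexFunctionProperties2}(i) applied to the convex function $h''$, and $h'(t^*) < 0$ by (A3)). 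Alternatively, if $\bar{t} < t^*$, continuity of $H_h$ gives $\bar{t} = H_h(\bar{t})$, which forces $h(\bar{t}) = 0$, contradicting Lemma \ref{lemma:MajorizingFunctionProperties}(ii) that $h(t) > 0$ on $[0,t^*)$; hence $\bar{t} = t^*$.

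Finally, the Q-cubic convergence rate is exactly the content of the inequality $t^* - t_{k+1} \leq C(t^* - t_k)^3$ just recorded, which is precisely estimate (\ref{estimate:t*-Hh(t)}) evaluated at $t = t_k$ (valid since $t_k \in [0,t^*)$ for every $k$); matching this against Definition \ref{definition:Q-orderConvergence} with $q = 3$, $c = C$, and $N = 0$ (using that the "error" is the scalar quantity $t^* - t_k \geq 0$) completes the proof. I expect the only genuinely delicate point to be the limit identification — specifically, ruling out the degenerate possibility $\bar t < t^*$ and handling the behavior of $H_h$ near $t^*$ — but both the fixed-point argument and the direct cubic bound dispatch it cleanly, so the main "obstacle" is really just bookkeeping rather than any substantive difficulty, since all the hard analytic work has been front-loaded into Lemmas \ref{lemma:estimate_Lh(t)}, \ref{lemma:estimate_Hh(t)} and \ref{lemma:estimate_t*-Hh(t)}.
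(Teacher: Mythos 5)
Your proof is correct and follows essentially the paper's own route: the paper states this theorem as an immediate consequence of Lemma \ref{lemma:estimate_Hh(t)} (which gives $t_k < t_{k+1} = H_h(t_k) < t^*$, hence well-definedness, strict monotonicity and boundedness) and Lemma \ref{lemma:estimate_t*-Hh(t)} (which gives the cubic estimate), and these are exactly the tools you deploy. One caveat: the cubic bound alone is not "the cleanest route" to identify the limit, since if $t_k \to \bar{t} < t^*$ it only yields $1 \leq \bigl[\tfrac{1}{3}\tfrac{h''(t^*)^2}{h'(t^*)^2} + \tfrac{2}{9}\tfrac{D^-h''(t^*)}{-h'(t^*)}\bigr](t^* - \bar{t})^2$ in the limit, which is not automatically contradictory without a smallness assumption; the identification $\bar{t} = t^*$ must rest on your alternative continuity/fixed-point argument ($\bar{t} = H_h(\bar{t})$ forces $h(\bar{t}) = 0$, contradicting $h > 0$ on $[0,t^*)$), which you do supply, so the argument is complete.
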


\section{Semilocal Convergence Results for Halley's Method}
\label{section:SemilocalConvergenceOfHalleyMethod}

In this section, we study the semilocal convergence of Halley's
method (\ref{iteration:HalleyMethod}) in Banach space. Assume $F$ is
a twice differentiable nonlinear operator in some convex domain $D$.
For a given guess $x_0 \in D$, suppose that $F'(x_0)^{-1}$ exists.
The following lemmas, which provide clear relationships between the
majorizing function and the nonlinear operator, will play key roles
for the convergence analysis of Halley's method
(\ref{iteration:HalleyMethod}).

\begin{lemma}
\label{lemma:estimateF'(x)-1F'(x0)} Suppose $\|x - x_0\| \leq t <
t^*$. If $h : [0,t^*) \to \mathbb{R}$ is twice continuously
differentiable and is the majorizing function to $F$ at $x_0$. Then
$F'(x)$ is nonsingular and
\begin{equation}
\label{estimate;normF'(x)-1F'(x0)} \|F'(x)^{-1}F'(x_0)\| \leq -
\frac{1}{h'(\|x - x_0\|)} \leq - \frac{1}{h'(t)}.
\end{equation}
In particular, $F'$ is nonsingular in $\ball(x_0,t^*)$.
\end{lemma}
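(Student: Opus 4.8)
The plan is to reduce the claim to Banach's lemma on invertible operators, after controlling the size of the perturbation $F'(x_0)^{-1}[F'(x) - F'(x_0)]$ by integrating the majorant condition on $F''$ once. First I would note that, since $\|x - x_0\| \leq t < t^*$, the segment $\{x_0 + \tau(x - x_0) : \tau \in [0,1]\}$ lies in $\ball(x_0,t^*) \subset D$ by convexity of the ball, so $F'$ is twice differentiable along it and
$$
F'(x_0)^{-1}[F'(x) - F'(x_0)] = \int_0^1 F'(x_0)^{-1}F''(x_0 + \tau(x - x_0))(x - x_0)\,\dif\tau .
$$
I would then split the integrand as $F'(x_0)^{-1}F''(x_0) + F'(x_0)^{-1}[F''(x_0 + \tau(x - x_0)) - F''(x_0)]$; the first summand has norm at most $h''(0)$ by the initial condition (\ref{condition:InitialCondition}), and the second has norm at most $h''(\tau\|x - x_0\|) - h''(0)$ by the majorant condition (\ref{condition:MajorantCondition}) applied with the pair of points $x_0$ and $x_0 + \tau(x - x_0)$ (so that the argument collapses to $\tau\|x - x_0\| < t^*$). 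Hence the integrand is bounded in norm by $h''(\tau\|x - x_0\|)\|x - x_0\|$.

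Integrating and using $h'(0) = -1$ from (A1) gives
$$
\|F'(x_0)^{-1}[F'(x) - F'(x_0)]\| \leq \int_0^1 h''(\tau\|x - x_0\|)\|x - x_0\|\,\dif\tau = h'(\|x - x_0\|) - h'(0) = h'(\|x - x_0\|) + 1 .
$$
Since $h'$ is strictly increasing on $[0,t^*)$ and $h'(s) < 0$ for $s \in (0,t^*)$ by Lemma \ref{lemma:MajorizingFunctionProperties}, and $\|x - x_0\| \leq t < t^*$, we have $h'(\|x - x_0\|) + 1 \leq h'(t) + 1 < 1$, so the perturbation has norm strictly less than $1$.

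Now I would invoke Banach's lemma: $\I + F'(x_0)^{-1}[F'(x) - F'(x_0)] = F'(x_0)^{-1}F'(x)$ is invertible with the norm of its inverse at most $1/\big(1 - \|F'(x_0)^{-1}[F'(x) - F'(x_0)]\|\big)$. Because $F'(x_0)$ is nonsingular, $F'(x)$ is nonsingular as well, and
$$
\|F'(x)^{-1}F'(x_0)\| \leq \frac{1}{1 - (h'(\|x - x_0\|) + 1)} = -\frac{1}{h'(\|x - x_0\|)} \leq -\frac{1}{h'(t)} ,
$$
the last inequality again from the monotonicity of $h'$ together with $h'(\|x - x_0\|) \leq h'(t) < 0$. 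Applying this with $t = \|x - x_0\|$ for an arbitrary $x \in \ball(x_0,t^*)$ then yields nonsingularity of $F'$ on the whole ball. I do not expect a genuine obstacle: the only points that need a little care are instantiating the majorant condition at $(x_0, x_0 + \tau(x-x_0))$ so the argument of $h''$ reduces correctly, and checking the strict inequality $h'(t) + 1 < 1$, i.e. that $h'$ remains negative up to $t^*$ — both supplied by (A1)–(A3) through Lemma \ref{lemma:MajorizingFunctionProperties}. This is the Halley analogue of the classical Newton–Kantorovich invertibility lemma, with the majorant on $F''$ integrated once to recover the bound on $F'$.
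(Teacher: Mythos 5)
Your proposal is correct and follows essentially the same route as the paper: the same integral representation of $F'(x)-F'(x_0)$ via $F''$, split into the $F''(x_0)$ term bounded by $h''(0)$ and the increment bounded by the majorant condition, integrated to give $h'(\|x-x_0\|)-h'(0)$, and then Banach's lemma with monotonicity of $h'$. The only difference is cosmetic — you keep the intermediate bound at $\|x-x_0\|$ before passing to $t$, which if anything makes the chain of inequalities in (\ref{estimate;normF'(x)-1F'(x0)}) slightly more explicit than the paper's write-up.
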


\begin{proof}
Take $x \in \overline{\ball(x_0,t)}$, $0 \leq t < t^*$. Since
\begin{equation*}
F'(x) = F'(x_0) + \int_0^1 [F''(x_0 + \tau(x - x_0)) - F''(x_0)](x -
x_0)\dif\tau + F''(x_0)(x - x_0),
\end{equation*}
by using conditions (\ref{condition:MajorantCondition}) and
(\ref{condition:InitialCondition}), we have
\begin{eqnarray*}
\|F'(x_0)^{-1}F'(x) - \I\| &\leq& \int_0^1
\|F'(x_0)^{-1}[F''(x_0^{\tau}) - F''(x_0)]\|\|x - x_0\|
d\tau + \|F'(x_0)^{-1}F''(x_0)\|\|x - x_0\|\\
&\leq& \int_0^1 \big[h''(\tau\|x - x_0\|) - h''(0)\big]\|x - x_0\|
d\tau + h''(0)\|x - x_0\|\\
&=& h'(\|x - x_0\|) - h'(0),
\end{eqnarray*}
where $x_0^{\tau} = x_0 + \tau(x - x_0)$. Since $h'(0) = - 1$ and
$-1 < h'(t) < 0$ for $(0,t^*)$ from Lemma
\ref{lemma:MajorizingFunctionProperties}, we get
$$
\|F'(x_0)^{-1}F'(x) - I\| \leq h'(t) - h'(0) < 1.
$$
It follows from Banach lemma that $F'(x_0)^{-1}F'(x)$ is nonsingular
and (\ref{estimate;normF'(x)-1F'(x0)}) holds. The proof is complete.
\end{proof}

\begin{lemma}
\label{lemma:estimateF'(x0)-1F''(x)} Suppose $\|x - x_0\| \leq t <
t^*$. If $h : [0,t^*) \to \mathbb{R}$ is twice continuously
differentiable and is the majorizing function to $F$ at $x_0$. Then
$\|F'(x_0)^{-1}F''(x)\| \leq h''(\|x - x_0\|) \leq h''(t)$.
\end{lemma}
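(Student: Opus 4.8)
The plan is to reduce the desired bound on $\|F'(x_0)^{-1}F''(x)\|$ to the two hypotheses already at hand, namely the majorant condition (\ref{condition:MajorantCondition}) and the initial condition (\ref{condition:InitialCondition}). First I would split $F''(x) = F''(x_0) + [F''(x) - F''(x_0)]$ and apply the triangle inequality to obtain
\[
\|F'(x_0)^{-1}F''(x)\| \leq \|F'(x_0)^{-1}F''(x_0)\| + \|F'(x_0)^{-1}[F''(x) - F''(x_0)]\|.
\]

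Next, I would control the first term on the right by the second half of the initial condition (\ref{condition:InitialCondition}), which gives $\|F'(x_0)^{-1}F''(x_0)\| \leq h''(0)$. For the second term, I would invoke the majorant condition (\ref{condition:MajorantCondition}) applied to the pair $(x, x_0)$ (i.e.\ taking the variable ``$y$'' there to be our point $x$ and the variable ``$x$'' there to be $x_0$): since $\|x - x_0\| \leq t < t^*$ and $t^* \in (0,R)$ by (A3), the admissibility requirement becomes $\|x - x_0\| + 0 < R$, which holds, so (\ref{condition:MajorantCondition}) yields $\|F'(x_0)^{-1}[F''(x) - F''(x_0)]\| \leq h''(\|x - x_0\|) - h''(0)$. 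Adding the two estimates, the $h''(0)$ contributions cancel and I obtain $\|F'(x_0)^{-1}F''(x)\| \leq h''(\|x - x_0\|)$.

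Finally, the remaining inequality $h''(\|x - x_0\|) \leq h''(t)$ follows at once from $\|x - x_0\| \leq t$ together with the monotonicity of $h''$ on $[0,R)$ granted by assumption (A2). I do not anticipate any real obstacle: the argument is a one-line triangle-inequality estimate, and the only point deserving a moment's attention is verifying that the pair $(x,x_0)$ lies in the admissible region of the majorant condition, i.e.\ that $\|x - x_0\| < R$, which is immediate since $t < t^* < R$.
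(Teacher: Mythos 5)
Your argument is correct and coincides with the paper's own proof: the same decomposition $F''(x) = F''(x_0) + [F''(x) - F''(x_0)]$, the same use of the majorant condition (\ref{condition:MajorantCondition}) with the pair $(x,x_0)$ together with the initial condition (\ref{condition:InitialCondition}), and the same appeal to the monotonicity of $h''$ from (A2) for the final inequality. No gaps.
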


\begin{proof}
By using (\ref{condition:MajorantCondition}), we have
\begin{eqnarray*}
\|F'(x_0)^{-1}F''(x)\| &\leq& \|F'(x_0)^{-1}[F''(x) -
F''(x_0)]\| + \|F'(x_0)^{-1}F''(x_0)\|\\
&\leq& h''(\|x - x_0\|) - h''(0) + h''(0) = h''(\|x - x_0\|).
\end{eqnarray*}
Since $h''$ is strictly increasing, we get $h''(\|x - x_0\|) \leq
h''(t)$. The proof is complete.
\end{proof}

\begin{lemma}
\label{lemma:ConvergenceAuxiliaryResults} Suppose that $h : [0,t^*)
\to \mathbb{R}$ is twice continuously differentiable. Let $\{x_k\}$
be generated by Halley's method $(\ref{iteration:HalleyMethod})$ and
$\{t_k\}$ be generated by $(\ref{majorizingsequence;tk})$. If $h$ is
the majorizing function to $F$ at $x_0$. Then, for all $k =
0,1,2,\ldots$, we have
\begin{enumerate}
\item[\textup{(i)}]
$F'(x_k)^{-1}$ exists and $\|F'(x_k)^{-1}F'(x_0)\| \leq - 1/h'(\|x_k
- x_0\|) \leq - 1/h'(t_k)$.
\item[\textup{(ii)}]
$\|F'(x_0)^{-1}F''(x_k)\| \leq h''(t_k)$.
\item[\textup{(iii)}]
$\|F'(x_0)^{-1}F(x_k)\| \leq h(t_k)$.
\item[\textup{(iv)}]
$[\I - L_F(x_k)]^{-1}$ exists and $\|[\I - L_F(x_k)]^{-1}\| \leq 1
/(1 - L_h(t_k))$.
\item[\textup{(v)}]
$\|x_{k+1} - x_k\| \leq t_{k+1} - t_k$.
\end{enumerate}
\end{lemma}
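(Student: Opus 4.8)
The five assertions are proved together by induction on $k$. For $k=0$ we have $\|x_0-x_0\|=0=t_0<t^*$, so (i) is Lemma~\ref{lemma:estimateF'(x)-1F'(x0)} with $x=x_0$, while (ii) and (iii) are exactly the initial conditions (\ref{condition:InitialCondition}) (recall $h'(0)=-1$ and $t_0=0$). Since $L_F(x_0)=\tfrac12 F'(x_0)^{-1}F''(x_0)F'(x_0)^{-1}F(x_0)$, (\ref{condition:InitialCondition}) gives $\|L_F(x_0)\|\le\tfrac12 h''(0)h(0)=L_h(0)$, and $L_h(0)\le 1/4<1$ by Lemma~\ref{lemma:estimate_Lh(t)}; hence Banach's lemma yields (iv), i.e.\ $\|[\I-L_F(x_0)]^{-1}\|\le 1/(1-\|L_F(x_0)\|)\le 1/(1-L_h(0))$. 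Then (v) follows from (\ref{iteration:HalleyMethod}) and (\ref{majorizingsequence;tk}), as $\|x_1-x_0\|\le\|[\I-L_F(x_0)]^{-1}\|\,\|F'(x_0)^{-1}F(x_0)\|\le h(0)/(1-L_h(0))=t_1-t_0$.

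For the inductive step, assume (i)--(v) hold for every index $\le n$. Summing (v) over $0,\dots,n$ gives $\|x_{n+1}-x_0\|\le t_{n+1}$, and $t_{n+1}\in[0,t^*)$ by Theorem~\ref{theorem:ConvergenceRealSpaceHalleyIterqtion}; thus Lemmas~\ref{lemma:estimateF'(x)-1F'(x0)} and~\ref{lemma:estimateF'(x0)-1F''(x)} applied at $x=x_{n+1}$, $t=t_{n+1}$ immediately give (i) and (ii) at $n+1$. The heart of the step is (iii). Writing $u_n:=x_{n+1}-x_n$, Taylor's formula gives
$$F(x_{n+1})=F(x_n)+F'(x_n)u_n+\tfrac12 F''(x_n)u_n^2+\int_0^1(1-\tau)\big[F''(x_n+\tau u_n)-F''(x_n)\big]u_n^2\,\dif\tau,$$
and the defining relation $[\I-L_F(x_n)]u_n=-F'(x_n)^{-1}F(x_n)$ of Halley's step, after substituting $F'(x_n)^{-1}F(x_n)=-u_n+L_F(x_n)u_n$, yields the algebraic identity
$$F(x_n)+F'(x_n)u_n+\tfrac12 F''(x_n)u_n^2=\tfrac12 F''(x_n)L_F(x_n)u_n^2,$$
which is the exact operator counterpart of the scalar relation $h(t_n)+h'(t_n)(t_{n+1}-t_n)+\tfrac12 h''(t_n)(t_{n+1}-t_n)^2=\tfrac12 L_h(t_n)h''(t_n)(t_{n+1}-t_n)^2$ coming from (\ref{majorizingsequence;tk}) and $L_h=hh''/(2h'^2)$. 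Applying $F'(x_0)^{-1}$, taking norms, using (\ref{condition:MajorantCondition}) and the induction hypotheses at $n$ — which provide $\|F'(x_0)^{-1}F''(x_n)\|\le h''(t_n)$, $\|L_F(x_n)\|\le L_h(t_n)$ (from (i)--(iii)) and $\|u_n\|\le t_{n+1}-t_n$ (from (v)) — together with the convexity and monotonicity of $h''$ from (A2) (to bound $h''(\tau\|u_n\|+\|x_n-x_0\|)-h''(\|x_n-x_0\|)$ by $h''(\tau(t_{n+1}-t_n)+t_n)-h''(t_n)$, cf.\ Lemma~\ref{lemma:ConvexFunctionProperties2}), each term is dominated by the corresponding term in the analogous Taylor expansion of $h(t_{n+1})$ about $t_n$; hence $\|F'(x_0)^{-1}F(x_{n+1})\|\le h(t_{n+1})$, which is (iii) at $n+1$.

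With (i)--(iii) established at $n+1$, assertion (iv) follows as in the base case: by (i), (ii), (iii) at $n+1$, $\|L_F(x_{n+1})\|\le\tfrac12\,\dfrac{h''(t_{n+1})}{-h'(t_{n+1})}\cdot\dfrac{h(t_{n+1})}{-h'(t_{n+1})}=L_h(t_{n+1})\le 1/4<1$ by Lemma~\ref{lemma:estimate_Lh(t)}, so Banach's lemma gives $\|[\I-L_F(x_{n+1})]^{-1}\|\le 1/(1-L_h(t_{n+1}))$; and (v) at $n+1$ follows from (\ref{iteration:HalleyMethod}) and (i), (iii), (iv) at $n+1$ via
$$\|x_{n+2}-x_{n+1}\|\le\|[\I-L_F(x_{n+1})]^{-1}\|\,\|F'(x_{n+1})^{-1}F'(x_0)\|\,\|F'(x_0)^{-1}F(x_{n+1})\|\le\frac{1}{1-L_h(t_{n+1})}\cdot\frac{h(t_{n+1})}{-h'(t_{n+1})}=t_{n+2}-t_{n+1},$$
completing the induction. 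The main obstacle is (iii): establishing the Halley analogue of the Newton--Kantorovich algebraic identity, carefully tracking the norms of the bilinear operator $F''$ and of the operator $L_F$, and matching the resulting estimate term by term with the scalar Taylor expansion of $h(t_{n+1})$. The one subtle point inside that argument is replacing $\|x_n-x_0\|$ by $t_n$ within the majorant term, which relies on the increments of the convex function $h''$ being nondecreasing.
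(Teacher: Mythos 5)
Your proof is correct and follows essentially the same route as the paper: the same induction, the same use of Lemmas \ref{lemma:estimateF'(x)-1F'(x0)} and \ref{lemma:estimateF'(x0)-1F''(x)} for (i)--(ii), the same identity $F(x_{n+1})=\tfrac12 F''(x_n)[L_F(x_n)u_n]u_n+\int_0^1(1-\tau)[F''(x_n+\tau u_n)-F''(x_n)]u_n^2\,\dif\tau$ for (iii), and the same Banach-lemma and telescoping arguments for (iv)--(v). The only difference is cosmetic: you derive that identity directly from Halley's defining relation (and spell out the convexity/monotonicity step for replacing $\|x_n-x_0\|$ by $t_n$), whereas the paper cites \cite{HanWang1997} for it.
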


\begin{proof}
(i)-(v) are obvious for the case $k = 0$. Now we assume that they
hold for some $n \in \mathbb{N}$. By the inductive hypothesis (v)
and Theorem \ref{theorem:ConvergenceRealSpaceHalleyIterqtion}, we
have $\|x_{n+1} - x_0\| \leq t_{n+1} < t^*$. It follows from Lemma
\ref{lemma:estimateF'(x)-1F'(x0)} and Lemma
\ref{lemma:estimateF'(x0)-1F''(x)} that (i) and (ii) hold for $k =
n+1$, respectively. As for (iii), we can derive the following
relation from \cite{HanWang1997}:
\begin{eqnarray*}
F(x_{n+1}) &=& \frac{1}{2}F''(x_n)L_F(x_n)(x_{n+1} - x_n)^2 +
\int_0^1 (1 - \tau)[F''(x_n^{\tau}) - F''(x_n)](x_{n+1} - x_n)^2
\dif\tau,
\end{eqnarray*}
where $x_n^{\tau} = x_n + \tau(x_{n+1} - x_n)$. Applying
(\ref{condition:MajorantCondition}) and the inductive hypotheses
(i)-(ii) and (iv)-(v), we can obtain
\begin{eqnarray*}
\lefteqn{\|F'(x_0)^{-1}F(x_{n+1})\| \leq
\frac{1}{2}\|F'(x_0)^{-1}F''(x_n)\|\|L_F(x_n)\|\|x_{n+1} - x_n\|^2}\\
&& + \int_0^1 [h''(\tau\|x_{n+1} - x_n\| + \|x_n - x_0\|) -
h''(\|x_n - x_0\|)]\|x_{n+1} - x_n\|^2(1 - \tau) \dif\tau\\
&\leq& \frac{1}{4}h''(t_n)\frac{h(t_n)h''(t_n)}{h'(t_n)^2}(t_{n+1} -
t_n)^2 + \int_0^1 \big[h''(\tau(t_{n+1} - t_n) + t_n) -
h''(t_n)\big](t_{n+1} -
t_n)^2(1 - \tau) \dif\tau\\
&=& h(t_{n+1}).
\end{eqnarray*}
This means (iii) holds for $k = n +1$. By Lemma
\ref{lemma:estimate_Lh(t)} and the inductive hypotheses (i)-(iii),
we get (iv) for $k = n +1$. Finally, for (v), we have
\begin{eqnarray}
\label{estimate:normxn+2-xn+1} \|x_{n+2} - x_{n+1}\| &\leq& \|[\I -
L_F(x_{n+1})]^{-1}\| \|F'(x_{n+1})^{-1}F'(x_0)\|
\|F'(x_0)^{-1}F(x_{n+1})\|\nonumber\\
&\leq& - \frac{1}{1 - L_h(t_{n+1})}\frac{h(t_{n+1})}{h'(t_{n+1})} =
t_{n+2} - t_{n+1}.
\end{eqnarray}
Therefore, the statements hold for all $k = 0,1,2,\ldots$. This
completes the proof.
\end{proof}

We are now ready to prove the semilocal convergence results
(convergence, convergence rate and uniqueness) for Halley's method
(\ref{iteration:HalleyMethod}).

\begin{theorem}
\label{theorem:SemilocalConvergenceHalleyMethodMajorantCondition}
Let $F:D\subset X \to Y$ be a twice continuously differentiable
nonlinear operator, $D$ open and convex. Assume that there exists a
starting point $x_0\in D$ such that $F'(x_0)^{-1}$ exists, and that
$h$ is the majorizing function to $F$ at $x_0$, i.e.,
$(\ref{condition:MajorantCondition})$ and
$(\ref{condition:InitialCondition})$ hold and $h$ satisfies
assumptions $(A1)-(A3)$. Then the sequence $\{x_k\}$ generated by
Halley's method $(\ref{iteration:HalleyMethod})$ for solving
equation $(\ref{eq:NonlinearOperatorEquation})$ with starting point
$x_0$ is well defined, is contained in $\ball(x_0,t^*)$ and
converges to a point $x^* \in \overline{\ball(x_0,t^*)}$ which is
the solution of equation $(\ref{eq:NonlinearOperatorEquation})$.
\end{theorem}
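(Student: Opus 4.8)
The plan is to derive Theorem~\ref{theorem:SemilocalConvergenceHalleyMethodMajorantCondition} by combining the convergence of the real majorizing sequence $\{t_k\}$ from Theorem~\ref{theorem:ConvergenceRealSpaceHalleyIterqtion} with the pointwise estimates of Lemma~\ref{lemma:ConvergenceAuxiliaryResults}; with those two ingredients in hand the argument reduces to a telescoping Cauchy-sequence estimate followed by a passage to the limit using continuity. First I would check that $\{x_k\}$ is well defined and contained in $\ball(x_0,t^*)$. Since $h$ is the majorizing function to $F$ at $x_0$, one has $\ball(x_0,t^*)\subset\ball(x_0,R)\subset D$, and Lemma~\ref{lemma:ConvergenceAuxiliaryResults}(i),(iv) guarantee that $F'(x_k)^{-1}$ and $[\I-L_F(x_k)]^{-1}$ exist at every step, so the Halley iterate $(\ref{iteration:HalleyMethod})$ is meaningful and $\{x_k\}$ is well defined by induction on $k$. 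Summing Lemma~\ref{lemma:ConvergenceAuxiliaryResults}(v) and using $t_0=0$ gives $\|x_k-x_0\|\le\sum_{j=0}^{k-1}\|x_{j+1}-x_j\|\le\sum_{j=0}^{k-1}(t_{j+1}-t_j)=t_k<t^*$, where $t_k<t^*$ comes from Theorem~\ref{theorem:ConvergenceRealSpaceHalleyIterqtion}; hence $x_k\in\ball(x_0,t^*)$ for all $k$, and all the preceding estimates remain applicable.

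Next I would establish convergence. For $m>k\ge 0$, Lemma~\ref{lemma:ConvergenceAuxiliaryResults}(v) yields
\[
\|x_m-x_k\|\le\sum_{j=k}^{m-1}\|x_{j+1}-x_j\|\le\sum_{j=k}^{m-1}(t_{j+1}-t_j)=t_m-t_k .
\]
By Theorem~\ref{theorem:ConvergenceRealSpaceHalleyIterqtion} the sequence $\{t_k\}$ converges to $t^*$, hence is Cauchy, so the right-hand side tends to $0$ as $k,m\to\infty$; therefore $\{x_k\}$ is a Cauchy sequence in the Banach space $X$ and converges to some $x^*\in X$. Letting $m\to\infty$ in the displayed inequality gives $\|x^*-x_k\|\le t^*-t_k$, and taking $k=0$ shows $\|x^*-x_0\|\le t^*$, i.e. $x^*\in\overline{\ball(x_0,t^*)}$.

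To finish, I would pass to the limit in Lemma~\ref{lemma:ConvergenceAuxiliaryResults}(iii), namely $\|F'(x_0)^{-1}F(x_k)\|\le h(t_k)$: the right-hand side tends to $h(t^*)=0$ by continuity of $h$ on $[0,R)$ together with $t_k\to t^*$, while the left-hand side tends to $\|F'(x_0)^{-1}F(x^*)\|$ since $F$ is continuous, $x_k\to x^*$, and $F'(x_0)^{-1}$ is a bounded operator; hence $F'(x_0)^{-1}F(x^*)=0$, and because $F'(x_0)^{-1}$ is injective, $F(x^*)=0$. I do not anticipate a genuine obstacle here, since Lemma~\ref{lemma:ConvergenceAuxiliaryResults} and Theorem~\ref{theorem:ConvergenceRealSpaceHalleyIterqtion} already carry the analytic weight (invertibility of $\I-L_F(x_k)$, the majorization $\|x_{k+1}-x_k\|\le t_{k+1}-t_k$, and the cubic decrease of $t^*-t_k$); the only points deserving a word of care are the inclusion $\ball(x_0,t^*)\subset D$ and the value $h(t^*)=0$, both immediate from the standing hypotheses, and the use of completeness of $X$ in the Cauchy step. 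If one also wanted the Q-cubic rate for $\{x_k\}$ itself (not claimed in this statement), one would additionally need a lower bound for $\|x^*-x_k\|$ in terms of $t^*-t_k$, which is where the only real extra work would lie.
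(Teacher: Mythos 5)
Your proof is correct and follows essentially the same route as the paper: well-definedness and the majorization $\|x_{k+1}-x_k\|\le t_{k+1}-t_k$ from Lemma \ref{lemma:ConvergenceAuxiliaryResults}, the telescoping Cauchy argument combined with Theorem \ref{theorem:ConvergenceRealSpaceHalleyIterqtion} to get $x_k\to x^*\in\overline{\ball(x_0,t^*)}$, and a limit argument for $F(x^*)=0$. The only (harmless) difference is that last step: the paper deduces $F(x_k)\to 0$ from $\|F(x_k)\|\le\|F'(x_k)\|(1-L_h(t_k))(t_{k+1}-t_k)$ together with boundedness of $\|F'(x_k)\|$, whereas you let $k\to\infty$ directly in Lemma \ref{lemma:ConvergenceAuxiliaryResults}(iii), using continuity of $h$, $h(t^*)=0$ and injectivity of $F'(x_0)^{-1}$, which is equally valid and arguably a bit cleaner.
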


\begin{proof}
By Lemma \ref{lemma:ConvergenceAuxiliaryResults}, we conclude that
the sequence $\{x_k\}$ is well defined. By Lemma
\ref{lemma:ConvergenceAuxiliaryResults} (v) and Theorem
\ref{theorem:ConvergenceRealSpaceHalleyIterqtion}, we have $\|x_k -
x_0\| \leq t_k < t^*$ for any $k \in \mathbb{N}$, which means that
$\{x_k\}$ is contained in $\ball(x_0,t^*)$. It follows from
(\ref{estimate:normxn+2-xn+1})  and Theorem
\ref{theorem:ConvergenceRealSpaceHalleyIterqtion} that
$$
\sum_{k = N}^\infty \|x_{k+1} - x_k\| \leq  \sum_{k = N}^\infty
(t_{k+1} - t_k) = t^* - t_N < + \infty,
$$
for any $N \in \mathbb{N}$. Hence $\{x_k\}$ is a Cauchy sequence in
$\ball(x_0,t^*)$ and so converges to some $x^* \in
\overline{\ball(x_0,t^*)}$. The above inequality also implies that
$\|x^* - x_k\| \leq t^* - t_k$ for any $k \in \mathbb{N}$. It
remains to prove that $F(x^*) = 0$. It follows from Lemma
\ref{lemma:estimateF'(x)-1F'(x0)} that $\{\|F'(x_k)\|\}$ is bounded.
By Lemma \ref{lemma:ConvergenceAuxiliaryResults}, we have
$$
\|F(x_k)\| \leq \|F'(x_k)\|\|F'(x_k)^{-1}F(x_k)\| \leq \|F'(x_k)\|(1
- L_h(t_k))(t_{k+1} - t_k).
$$
Letting $k \to \infty$, by noting the fact that $L_h(x_k)$ is
bounded (from Lemma \ref{lemma:estimate_Lh(t)}) and $\{t_k\}$ is
convergent, we have $\lim\limits_{k \to \infty} F(x_k) = 0$. Since
$F$ is continuous in $\overline{\ball(x_0,t^*)}$, $\{x_k\} \subset
\ball(x_0,t^*)$ and $\{x_k\}$ converges to $x^*$, we also have
$\lim\limits_{k \to \infty}F(x_k) = F(x^*)$. This completes the
proof.
\end{proof}

\begin{theorem}
\label{theorem:ConvergenceRate} Under the assumptions of Theorem
$\ref{theorem:SemilocalConvergenceHalleyMethodMajorantCondition}$,
we have the following error bound:
\begin{equation}
\label{estimate:ErrorEstimateHalleyMethod} \|x^* - x_{k+1}\| \leq
(t^* - t_{k+1})\left(\frac{\|x^* - x_k\|}{t^* - t_k}\right)^3,\ \ \
k = 0,1,\ldots.
\end{equation}
Thus, the sequence $\{x_k\}$ generated by Halley's method
$(\ref{iteration:HalleyMethod})$ converges Q-cubic as follows
\begin{equation}
\label{estimate:ErrorEstimateExactHalleyMethod} \|x^* - x_{k+1}\|
\leq \left[\frac{1}{3} \frac{h''(t^*)^2}{h'(t^*)^2} + \frac{2}{9}
\frac{D^-h''(t^*)}{-h'(t^*)}\right]\|x^* - x_k\|^3,\ \ \ k =
0,1,\ldots.
\end{equation}
\end{theorem}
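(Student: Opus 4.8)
The plan is to reproduce, at the level of the Banach space iteration, the algebraic identity that drives the proof of Lemma~\ref{lemma:estimate_t*-Hh(t)}, and then to transfer every resulting estimate to the real line by means of the majorant condition~(\ref{condition:MajorantCondition}) and the convexity lemmas of Section~\ref{section:Preliminaries}. Fix $k$ and write $u := x^* - x_k$. From (the proof of) Theorem~\ref{theorem:SemilocalConvergenceHalleyMethodMajorantCondition} we have $\|x_k - x_0\| \le t_k < t^*$ and $\|u\| \le t^* - t_k$; in particular the segment $[x_k,x^*]$ lies in $\ball(x_0,R)$ and Lemmas~\ref{lemma:estimateF'(x)-1F'(x0)}--\ref{lemma:ConvergenceAuxiliaryResults} are available at $x_k$. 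Note also that (\ref{estimate:ErrorEstimateExactHalleyMethod}) is an immediate consequence of (\ref{estimate:ErrorEstimateHalleyMethod}): multiplying the bound (\ref{estimate:t*-Hh(t)}) for $t^* - t_{k+1} = t^* - H_h(t_k)$ by $\bigl(\|x^*-x_k\|/(t^*-t_k)\bigr)^3$ turns the powers of $t^*-t_k$ into powers of $\|x^*-x_k\|$. So the whole task is to establish (\ref{estimate:ErrorEstimateHalleyMethod}).

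First I would expand $F$ by Taylor's formula about $x_k$ and use $F(x^*)=0$:
$$0 = F(x_k) + F'(x_k)u + \tfrac12 F''(x_k)u^2 + R_k, \qquad R_k := \int_0^1 (1-\tau)\bigl[F''(x_k + \tau u) - F''(x_k)\bigr]u^2\,\dif\tau .$$
Since $x^* - x_{k+1} = [\I - L_F(x_k)]^{-1}\bigl((\I - L_F(x_k))u + F'(x_k)^{-1}F(x_k)\bigr)$, substituting $F'(x_k)^{-1}F(x_k) = -u - \tfrac12 F'(x_k)^{-1}F''(x_k)u^2 - F'(x_k)^{-1}R_k$ and $L_F(x_k)u = \tfrac12 F'(x_k)^{-1}F''(x_k)\bigl(F'(x_k)^{-1}F(x_k),\,u\bigr)$, the zeroth-, first- and second-order contributions cancel exactly as in the proof of Lemma~\ref{lemma:estimate_t*-Hh(t)}, leaving
$$x^* - x_{k+1} = [\I - L_F(x_k)]^{-1}\Bigl[\tfrac14 F'(x_k)^{-1}F''(x_k)\bigl(F'(x_k)^{-1}F''(x_k)u^2,\,u\bigr) + \tfrac12 F'(x_k)^{-1}F''(x_k)\bigl(F'(x_k)^{-1}R_k,\,u\bigr) - F'(x_k)^{-1}R_k\Bigr].$$

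Next I would bound the three bracketed terms. With $\|F'(x_k)^{-1}F''(x_k)\| \le \|F'(x_k)^{-1}F'(x_0)\|\,\|F'(x_0)^{-1}F''(x_k)\| \le h''(t_k)/(-h'(t_k))$ from Lemma~\ref{lemma:ConvergenceAuxiliaryResults}(i)--(ii), the first term has norm at most $\tfrac14 h''(t_k)^2 h'(t_k)^{-2}\|u\|^3 = \tfrac14 h''(t_k)^2 h'(t_k)^{-2}(t^*-t_k)^3\bigl(\|u\|/(t^*-t_k)\bigr)^3$. For $R_k$, the majorant condition gives $\|F'(x_0)^{-1}[F''(x_k+\tau u) - F''(x_k)]\| \le h''(\tau\|u\| + \|x_k-x_0\|) - h''(\|x_k-x_0\|)$; since $h''$ is convex its increments are nondecreasing in the base point, so this is $\le h''(\tau\|u\| + t_k) - h''(t_k)$, and Lemma~\ref{lemma:ConvexFunctionProperties2}(ii) applied to $h''$ at $t_k \le \tau\|u\| + t_k \le \tau(t^*-t_k) + t_k$ (legitimate because $\|u\| \le t^*-t_k$) inserts the rescaling factor $\|u\|/(t^*-t_k)$. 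Carrying this through yields $\|F'(x_0)^{-1}R_k\| \le \bigl(\|u\|/(t^*-t_k)\bigr)^3 J_k$, where $J_k := \int_0^1 (1-\tau)\bigl[h''(\tau(t^*-t_k)+t_k) - h''(t_k)\bigr](t^*-t_k)^2\,\dif\tau$ is exactly the integral remainder in Taylor's formula for $h(t^*)=0$ expanded about $t_k$. Using Lemma~\ref{lemma:ConvergenceAuxiliaryResults}(i) to pass from $F'(x_0)^{-1}R_k$ to $F'(x_k)^{-1}R_k$, using $\|u\| \le t^*-t_k$ once more to absorb the extra factor of $\|u\|$ in the middle term, and applying $\|[\I - L_F(x_k)]^{-1}\| \le 1/(1-L_h(t_k))$ (Lemma~\ref{lemma:ConvergenceAuxiliaryResults}(iv)), I would conclude
$$\|x^* - x_{k+1}\| \le \frac{1}{1-L_h(t_k)}\Bigl[\tfrac14\tfrac{h''(t_k)^2}{h'(t_k)^2}(t^*-t_k)^3 + \tfrac{h''(t_k)(t^*-t_k)}{2h'(t_k)^2}J_k + \tfrac{1}{-h'(t_k)}J_k\Bigr]\Bigl(\tfrac{\|x^*-x_k\|}{t^*-t_k}\Bigr)^3 .$$
The bracketed scalar equals $(1-L_h(t_k))(t^* - H_h(t_k))$ — this is precisely the identity obtained at the start of the proof of Lemma~\ref{lemma:estimate_t*-Hh(t)} — so the coefficient collapses to $t^* - H_h(t_k) = t^* - t_{k+1}$, which is (\ref{estimate:ErrorEstimateHalleyMethod}), and then (\ref{estimate:ErrorEstimateExactHalleyMethod}) follows as noted above via Lemma~\ref{lemma:estimate_t*-Hh(t)} and Theorem~\ref{theorem:ConvergenceRealSpaceHalleyIterqtion}.

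The main obstacle I anticipate is bookkeeping rather than conceptual: one must carry out the cancellation of the lower-order terms in the Banach space identity carefully (the bilinear operator $L_F(x_k)$ applied to $u$ has to be expanded term by term using symmetry of $F''$), and, crucially, match each surviving remainder with its scalar counterpart so that the coefficient collapses to \emph{exactly} $t^* - t_{k+1}$ and not merely something bounded by it. The one genuinely delicate estimate is the treatment of $R_k$: converting $\int_0^1(1-\tau)[h''(\tau\|u\|+t_k)-h''(t_k)]\|u\|^2\,\dif\tau$ into $\bigl(\|u\|/(t^*-t_k)\bigr)^3 J_k$ uses both the monotonicity of the increments of the convex function $h''$ and Lemma~\ref{lemma:ConvexFunctionProperties2}(ii), and it rests essentially on the a priori estimate $\|x^* - x_k\| \le t^* - t_k$ supplied by Theorem~\ref{theorem:SemilocalConvergenceHalleyMethodMajorantCondition}.
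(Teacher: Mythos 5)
Your proposal is correct and follows essentially the same route as the paper: the same decomposition of $x^* - x_{k+1}$ via Taylor expansion and the Halley iteration identity (you merely split the integral remainder as $\tfrac12 F''(x_k)u^2 + R_k$), the same convexity rescaling through Lemma~\ref{lemma:ConvexFunctionProperties2}(ii) with the a priori bound $\|x^*-x_k\|\le t^*-t_k$, and the same exact collapse of the scalar coefficient to $t^*-t_{k+1}$ using the identity underlying Lemma~\ref{lemma:estimate_t*-Hh(t)}, after which (\ref{estimate:ErrorEstimateExactHalleyMethod}) follows as you indicate.
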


\begin{proof}
Set $\Gamma_F = [I - L_F(x)]^{-1}$. Applying standard analytical
techniques, one has that
\begin{eqnarray*}
x^* - x_{k+1} &=& - \Gamma_F(x_k)F'(x_k)^{-1}[- F'(x_k)(x^* - x_k) -
F(x_k)] - \Gamma_F(x_k)L_F(x_k)(x^* - x_k)\\
&=& - \Gamma_F(x_k)F'(x_k)^{-1} \int_0^1 (1 - \tau)[F''(x_k^{\tau})
- F''(x_k)](x^* - x_k)^2 \dif\tau \\
&& + \frac{1}{2}\Gamma_F(x_k)F'(x_k)^{-1}F''(x_k)\left[F'(x_k)^{-1}
\int_0^1 (1 - \tau)F''(x_k^{\tau})(x^* - x_k)^2 \dif\tau\right](x^*
- x_k),
\end{eqnarray*}
where $x_k^{\tau} = x_k + \tau(x^* - x_k)$. Using
(\ref{condition:MajorantCondition}), one has that
$$
\int_0^1 \|F'(x_0)^{-1}[F''(x_k^{\tau}) - F''(x_k)]\|(1 - \tau)
\dif\tau \leq \int_0^1 [h''(\tau\|x^* - x_k\| + \|x_k - x_0\|) -
h''(\|x_k - x_0\|)](1 - \tau) \dif\tau.
$$
Then, we use Lemma \ref{lemma:ConvexFunctionProperties2} to obtain
\begin{eqnarray*}
h''(\tau\|x^* - x_k\| + \|x_k - x_0\|) - h''(\|x_k - x_0\|)
&\leq& h''(\tau\|x^* - x_k\| + t_k) - h''(t_k)\\
&\leq& [h''(\tau(t^* - t_k) + t_k) - h''(t_k)]\frac{\|x^* -
x_k\|}{t^* - t_k}.
\end{eqnarray*}
This together with Lemma \ref{lemma:estimateF'(x0)-1F''(x)} and
Lemma \ref{lemma:ConvergenceAuxiliaryResults}, we have
\begin{eqnarray*}
\|x^* - x_{k+1}\| &\leq& - \frac{1}{(1 - L_h(t_k))h'(t_k)}
\left[\int_0^1 [h''(\tau(t^* - t_k) + t_k) - h''(t_k)](1 - \tau)
\dif\tau\right]
\frac{\|x^* - x_k\|^3}{t^* - t_k}\\
&& + \frac{1}{2}\frac{h''(t_k)}{(1 - L_h(t_k))h'(t_k)^2}
\left[\int_0^1 h''(\tau(t^* - t_k) + t_k)(1 - \tau) \dif\tau\right]
\|x^* - x_k\|^3\\
&=& - \frac{1}{(1 - L_h(t_k))h'(t_k)}\left(\frac{\|x^* - x_k\|}{t^*
- t_k}\right)^3 h(t_k)\frac{t^* - t_{k+1}}{t_{k+1} - t_k} = (t^* -
t_{k+1})\left(\frac{\|x^* - x_k\|}{t^* - t_k}\right)^3.
\end{eqnarray*}
This shows (\ref{estimate:ErrorEstimateHalleyMethod}) holds for all
$k \in \mathbb{N}$. (\ref{estimate:ErrorEstimateExactHalleyMethod})
follows from Lemma \ref{lemma:estimate_t*-Hh(t)}. The proof is
complete.
\end{proof}

\begin{theorem}
\label{theorem:UniquenessSolution} Under the assumptions of Theorem
$\ref{theorem:SemilocalConvergenceHalleyMethodMajorantCondition}$,
the limit $x^*$ of the sequence $\{x_k\}$ is the unique zero of
equation $(\ref{eq:NonlinearOperatorEquation})$ in
$\ball(x_0,\rho)$, where $\rho$ is defined as $\rho := \sup\{t \in
[t^*,R): h(t) \leq 0\}$.
\end{theorem}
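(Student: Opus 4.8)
The plan is to establish uniqueness by the standard device of showing that the averaged derivative of $F$ along the segment joining $x^*$ to any competitor zero is nonsingular. Concretely, suppose $y^*\in\ball(x_0,\rho)$ satisfies $F(y^*)=0$, and set $r:=\|y^*-x_0\|<\rho$ and $J:=\int_0^1 F'(x^*+\tau(y^*-x^*))\,\dif\tau$. First I would check that the whole segment lies in the domain: since $\|x^*-x_0\|\le t^*$ (from Theorem~\ref{theorem:SemilocalConvergenceHalleyMethodMajorantCondition}) and $t^*<\rho\le R$ (using Remark~\ref{remark:h'(t*)} for $t^*<\rho$), the point $z_\tau:=x^*+\tau(y^*-x^*)$ satisfies $\|z_\tau-x_0\|\le(1-\tau)t^*+\tau r<R$, so $[x^*,y^*]\subset\ball(x_0,R)\subset D$, and the fundamental theorem of calculus gives $0=F(y^*)-F(x^*)=J(y^*-x^*)$. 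Hence it suffices to prove that $J$ is invertible.

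For that, the key observation is that the bound $\|F'(x_0)^{-1}F'(x)-\I\|\le h'(\|x-x_0\|)-h'(0)$ obtained inside the proof of Lemma~\ref{lemma:estimateF'(x)-1F'(x0)} uses only the majorant condition~(\ref{condition:MajorantCondition}) and the initial condition~(\ref{condition:InitialCondition}), hence is valid for every $x\in\ball(x_0,R)$, not merely inside radius $t^*$. Integrating this bound along the segment, using $h'(0)=-1$ and the monotonicity of $h'$ (Lemma~\ref{lemma:MajorizingFunctionProperties}), I would get $\|\I-F'(x_0)^{-1}J\|\le 1+\int_0^1 h'\big((1-\tau)t^*+\tau r\big)\,\dif\tau$. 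The remaining task is to show this last integral is strictly negative. When $r\le t^*$ the integrand is $\le h'(t^*)<0$ by assumption (A3). When $t^*<r<\rho$, the change of variable $s=(1-\tau)t^*+\tau r$ turns the integral into $\big(h(r)-h(t^*)\big)/(r-t^*)=h(r)/(r-t^*)$, which is negative because $r-t^*>0$ and $h(r)\le 0$ by the definition of $\rho$; in fact $h(r)<0$, since $h(r)=0$ would force $r$ to be the unique zero of $h$ in $(t^*,R)$ (Lemma~\ref{lemma:MajorizingFunctionProperties}(ii)) and thus $r=\rho$, contradicting $r<\rho$.

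Thus $\|\I-F'(x_0)^{-1}J\|<1$ in all cases, so by the Banach lemma $F'(x_0)^{-1}J$, and hence $J$, is invertible; combined with $J(y^*-x^*)=0$ from the first step this yields $y^*=x^*$, proving uniqueness in $\ball(x_0,\rho)$. I expect the main (minor) obstacle to be verifying that the averaged slope of $h$ along the segment is strictly negative for every admissible $r$, in particular excluding the borderline case $h(r)=0$ with $t^*<r<\rho$; this is exactly where the "at most one zero of $h$ beyond $t^*$'' property enters, and it also explains why $\rho$ is defined through $h(t)\le 0$ rather than $h(t)=0$.
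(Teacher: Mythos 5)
Your proof is correct, but it takes a genuinely different route from the paper. You handle the whole ball $\ball(x_0,\rho)$ in one stroke: for any competitor zero $y^*$ you show the averaged derivative $J=\int_0^1 F'(x^*+\tau(y^*-x^*))\,\dif\tau$ is invertible via the Banach lemma, observing (correctly) that the estimate $\|F'(x_0)^{-1}F'(x)-\I\|\le h'(\|x-x_0\|)+1$ from the proof of Lemma \ref{lemma:estimateF'(x)-1F'(x0)} only needs $\|x-x_0\|<R$, and then reducing everything to the strict negativity of the averaged slope $\bigl(h(r)-h(t^*)\bigr)/(r-t^*)$, i.e.\ to $h'(t^*)<0$ when $r\le t^*$ and $h(r)<0$ when $t^*<r<\rho$. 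The paper instead argues in two stages: uniqueness inside $\overline{\ball(x_0,t^*)}$ is obtained by re-running the cubic error estimate of Theorem \ref{theorem:ConvergenceRate} with the competitor zero in place of $x^*$ and inducting on $\|x^{**}-x_k\|\le t^*-t_k$, while nonexistence of zeros in the annulus $\ball(x_0,\rho)\setminus\overline{\ball(x_0,t^*)}$ comes from a second-order Taylor expansion of $F$ at $x_0$ together with the majorant condition, yielding $h(\|x^{**}-x_0\|)\ge 0$ and then a convexity contradiction with $\|x^{**}-x_0\|<\rho$. Your argument is more elementary and uniform (no appeal to the convergence-rate theorem, and it yields the slightly stronger fact that the averaged derivative along the segment is nonsingular); the paper's argument reuses machinery it has already built and only ever evaluates $F'$-type bounds at $x_0$ or inside radius $t^*$. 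One small repair: the claim $h(r)\le 0$ for $t^*\le r<\rho$ does not follow from the definition of $\rho$ alone but from convexity of $h$ (the sublevel set $\{t\in[t^*,R):h(t)\le 0\}$ is an interval containing $t^*$), so cite Lemma \ref{lemma:MajorizingFunctionProperties}(ii) there — exactly the convexity fact the paper itself invokes at the corresponding point; your subsequent exclusion of the borderline case $h(r)=0$ via the uniqueness of the second root is fine.
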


\begin{proof}
We first to show the solution $x^*$ of
(\ref{eq:NonlinearOperatorEquation}) is unique in
$\overline{\ball(x_0,t^*)}$. Assume that there exists another
solution $x^{**}$ in $\overline{\ball(x_0,t^*)}$. Then $\|x^{**} -
x_0\| \leq t^*$. Now we prove by induction that
\begin{equation}
\label{estimate:normx**-xk} \|x^{**} - x_k\| \leq t^* - t_k,\ \ \ k
= 0,1,2,\ldots.
\end{equation}
It is clear that the case $k = 0$ holds because of $t_0 = 0$. Assume
that the above inequality holds for some $n \in \mathbb{N}$. By
Theorem \ref{theorem:ConvergenceRate} we have
$$
\|x^{**} - x_{k+1}\| \leq (t^* - t_{k+1})\left(\frac{\|x^{**} -
x_k\|}{t^* - t_k}\right)^3.
$$
Then, by applying the inductive hypothesis
(\ref{estimate:normx**-xk}) to the above inequality, one has that
(\ref{estimate:normx**-xk}) also holds for $k = n+1$. Since
$\{x_k\}$ converges to $x^*$ and $\{t_k\}$ converges to $t^*$, from
(\ref{estimate:normx**-xk}) we conclude $x^{**} = x^*$. Therefore,
$x^*$ is the unique zero of (\ref{eq:NonlinearOperatorEquation}) in
$\overline{\ball(x_0,t^*)}$.

It remains to prove that $F$ does not have zeros in
$\ball(x_0,\rho)\backslash \overline{\ball(x_0,t^*)}$. For proving
this fact by contradiction, assume that $F$ does have a zero there,
i.e., there exists $x^{**} \in D \subset X$ such that $t^* <
\|x^{**} - x_0\| < \rho$ and $F(x^{**}) = 0$. We will show that the
above assumptions do not hold. Firstly, we have the following
observation;
\begin{equation}
\label{estimate:F(x**)} F(x^{**}) = F(x_0) + F'(x_0)(x^{**} - x_0) +
\frac{1}{2}F''(x_0)(x^{**} - x_0)^2 + (1 - \tau) \int_0^1
[F''(x_0^{\tau}) - F''(x_0)](x^{**} - x_0)^2 \dif\tau,
\end{equation}
where $x_0^{\tau} = x_0 + \tau(x^{**} - x_0)$. Secondly, we use
(\ref{condition:MajorantCondition}) to yield
\begin{eqnarray}
\lefteqn{\left\|(1 - \tau)\int_0^1 F'(x_0)^{-1}[F''(x_0^{\tau}) -
F''(x_0)](x^{**} - x_0)^2 \dif\tau\right\|}\nonumber\\
&\leq& \int_0^1 [h''(\tau\|x^{**} - x_0\|) - h''(0)]\|x^{**} -
x_0\|^2 (1 - \tau) \dif\tau \nonumber\\
&=& h(\|x^{**} - x_0\|) - h(0) - h'(0)\|x^{**} - x_0\| -
\frac{1}{2}h''(0) \|x^{**} - x_0\|^2. \label{estimate:F(x**)2}
\end{eqnarray}
Thirdly, by applying (\ref{condition:InitialCondition}), one has
that
\begin{eqnarray}
\lefteqn{\left\|F'(x_0)^{-1}[F(x_0) + F'(x_0)(x^{**} - x_0) +
\frac{1}{2}F''(x_0)(x^{**} - x_0)^2]\right\|}\nonumber\\
&\geq& \|x^{**} - x_0\| - \|F'(x_0)^{-1}F(x_0)\| - \frac{1}{2}
\|F'(x_0)^{-1}F''(x_0)\|\|x^{**} - x_0\|^2 \nonumber\\
&\geq& \|x^{**} - x_0\| - h(0) - \frac{1}{2}h''(0) \|x^{**} -
x_0\|^2. \label{estimate:F(x**)1}
\end{eqnarray}
In view of $F(x^{**}) = 0$ and $h'(0) = - 1$, combining
(\ref{estimate:F(x**)1}) and (\ref{estimate:F(x**)2}), we obtain
from (\ref{estimate:F(x**)}) that
$$
h(\|x^{**} - x_0\|) - h(0) + \|x^{**} - x_0\| - \frac{1}{2}h''(0)
\|x^{**} - x_0\|^2 \geq \|x^{**} - x_0\| - h(0) - \frac{1}{2}h''(0)
\|x^{**} - x_0\|^2,
$$
which is equivalent to $h(\|x^{**} - x_0\|) \geq 0$. Note that $h$
is strictly convex by Lemma
\ref{lemma:MajorizingFunctionProperties}. Hence $h$ is strictly
positive in the interval $(\|x^{**} - x_0\|,R)$. So, we get $\rho
\leq \|x^{**} - x_0\|$, which is a contradiction to the above
assumptions. Therefore, $F$ does not have zeros in
$\ball(x_0,\rho)\backslash \overline{\ball(x_0,t^*)}$ and $x^*$ is
the unique zero of equation (\ref{eq:NonlinearOperatorEquation}) in
$\ball(x_0,\rho)$. The proof is complete.
\end{proof}

\section{Special Cases}
\label{section:SpecialCases}

In this section we present two special cases of the convergence
results obtained in Section
\ref{section:SemilocalConvergenceOfHalleyMethod}. Namely,
convergence results under an affine covariant Lipschitz condition
and the $\gamma$-condition.

\subsection{Convergence results under the affine covariant Lipschitz condition}

In \cite{HanWang1997}, by using the majorizing technique, Han and
Wang studied the semilocal convergence of Halley's method
(\ref{iteration:HalleyMethod}) under affine covariant Lipschitz
condition:
\begin{equation}
\label{condition:LipschitzCondition} \|F'(x_0)^{-1}[F''(y) -
F''(x)]\| \leq L\|y - x\|,\ \ \ x,y \in D.
\end{equation}
The majorizing function employed in \cite{HanWang1997} is
\begin{equation}
\label{majorizingfunction:cubicfunction} f(t) = \beta - t +
\frac{\eta}{2} t^2 + \frac{L}{6} t^3.
\end{equation}

If we choose this cubic polynomial as the majorizing function $h$ in
(\ref{condition:MajorantCondition}), then we can see that the
majorant condition (\ref{condition:MajorantCondition}) reduced to
the Lipschitz condition (\ref{condition:LipschitzCondition}) and
that assumptions (A1) and (A2) are satisfied for $f$. Moreover, if
the following Kantorovich-type convergence criterion holds
\begin{equation}
\label{criterion:LipschitzConditionConvergenceCriterion} \beta < b
:= \frac{2(\eta + 2\sqrt{\eta^2 + 2L})}{3(\eta + \sqrt{\eta^2 +
2L})^2},
\end{equation}
then assumption (A3) is satisfied for $f$. Thus, the concrete forms
of Theorem
\ref{theorem:SemilocalConvergenceHalleyMethodMajorantCondition},
Theorem \ref{theorem:ConvergenceRate} and Theorem
\ref{theorem:UniquenessSolution} are given as follows.

\begin{theorem}
\label{theorem:SemilocalConvergenceHalleyMethodLipschitzCondition}
Let $F:D\subset X \to Y$ be a twice continuously differentiable
nonlinear operator, $D$ open and convex. Assume that there exists a
starting point $x_0\in D$ such that $F'(x_0)^{-1}$ exists, and
satisfies the affine covariant Lipschitz condition
$(\ref{condition:LipschitzCondition})$ and $\|F'(x_0)^{-1}F(x_0)\|
\leq \beta$, $\|F'(x_0)^{-1}F''(x_0)\| \leq \eta$. If
$(\ref{criterion:LipschitzConditionConvergenceCriterion})$ holds,
then the sequence $\{x_k\}$ generated by Halley's method
$(\ref{iteration:HalleyMethod})$ for solving equation
$(\ref{eq:NonlinearOperatorEquation})$ with starting point $x_0$ is
well defined, is contained in $\ball(x_0,t^*)$ and converges to a
point $x^* \in \overline{\ball(x_0,t^*)}$ which is the solution of
equation $(\ref{eq:NonlinearOperatorEquation})$, where $t^*$ is the
smallest positive root of $f$ $(defined \ by \
(\ref{majorizingfunction:cubicfunction}))$ in $[0,r_1]$, where $r_1
= 2/(\eta + \sqrt{\eta^2 + 2L})$ is the positive root of $f'$. The
limit $x^*$ of the sequence $\{x_k\}$ is the unique zero of equation
$(\ref{eq:NonlinearOperatorEquation})$ in $\ball(x_0,t^{**})$, where
$t^{**}$ is the root of $f$ in interval $[r_1,+\infty)$. Moreover,
the following error bound holds:
\begin{equation*}
\|x^* - x_{k+1}\| \leq (t^* - t_{k+1})\left(\frac{\|x^* - x_k\|}{t^*
- t_k}\right)^3,\ \ \ k = 0,1,\ldots.
\end{equation*}
And the sequence $\{x_k\}$ Q-cubically converges as follows:
\begin{equation*}
\|x^* - x_{k+1}\| \leq \frac{3(\eta + Lt^*)^2 + 2L(1 - \eta t^* -
Lt^{*2}/2)}{9(1 - \eta t^* - Lt^{*2}/2)^2}\|x^* - x_k\|^3,\ \ \ k =
0,1,\ldots.
\end{equation*}
\end{theorem}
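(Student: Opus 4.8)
The plan is to obtain Theorem \ref{theorem:SemilocalConvergenceHalleyMethodLipschitzCondition} by specializing the general results (Theorems \ref{theorem:SemilocalConvergenceHalleyMethodMajorantCondition}, \ref{theorem:ConvergenceRate} and \ref{theorem:UniquenessSolution}) to the cubic majorizing function $f$ defined in (\ref{majorizingfunction:cubicfunction}). The only real work is to verify that $f$ is a legitimate majorizing function to $F$ at $x_0$ in the sense of Definition \ref{definition:MajorizingFunction}, i.e.\ that the majorant condition (\ref{condition:MajorantCondition}) and the initial conditions (\ref{condition:InitialCondition}) hold and that $f$ satisfies (A1)--(A3); and then to translate the abstract quantities $t^*$, $\rho$, $h''(t^*)$, $h'(t^*)$ and $D^-h''(t^*)$ into the explicit expressions appearing in the statement.

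First I would record the elementary facts about $f$: $f(0)=\beta=\beta>0$, $f'(t)=-1+\eta t+\tfrac{L}{2}t^2$ so $f'(0)=-1$, $f''(t)=\eta+Lt$ so $f''(0)=\eta>0$ (assuming $\eta>0$; if $\eta=0$ the argument is the same with obvious modifications), and $f''$ is affine, hence convex and strictly increasing, so (A1) and (A2) hold. Next, the Lipschitz condition (\ref{condition:LipschitzCondition}) combined with $f''(t)-f''(s)=L(t-s)$ shows that (\ref{condition:MajorantCondition}) reduces exactly to (\ref{condition:LipschitzCondition}); the hypotheses $\|F'(x_0)^{-1}F(x_0)\|\le\beta=f(0)$ and $\|F'(x_0)^{-1}F''(x_0)\|\le\eta=f''(0)$ are (\ref{condition:InitialCondition}). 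For (A3) I would note that $f'$ has the unique positive root $r_1=2/(\eta+\sqrt{\eta^2+2L})$, that $f$ is strictly decreasing on $[0,r_1)$ and strictly increasing on $(r_1,\infty)$, and that $f(r_1)<0$ is equivalent, after a direct computation of $f(r_1)=\beta-r_1+\tfrac{\eta}{2}r_1^2+\tfrac{L}{6}r_1^3$, to the criterion (\ref{criterion:LipschitzConditionConvergenceCriterion}) $\beta<b$. Hence under (\ref{criterion:LipschitzConditionConvergenceCriterion}) the function $f$ has exactly two positive roots, the smaller one $t^*\in(0,r_1)$ and the larger one $t^{**}\in(r_1,\infty)$, with $f'(t^*)<0$; so (A3) holds and moreover $\rho:=\sup\{t\ge t^*:f(t)\le 0\}=t^{**}$.

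With this verification done, Theorem \ref{theorem:SemilocalConvergenceHalleyMethodMajorantCondition} gives at once that $\{x_k\}$ is well defined, lies in $\ball(x_0,t^*)$ and converges to a zero $x^*\in\overline{\ball(x_0,t^*)}$; Theorem \ref{theorem:UniquenessSolution} gives uniqueness in $\ball(x_0,\rho)=\ball(x_0,t^{**})$; and Theorem \ref{theorem:ConvergenceRate} gives the error bound $\|x^*-x_{k+1}\|\le(t^*-t_{k+1})(\|x^*-x_k\|/(t^*-t_k))^3$ verbatim. It remains only to plug into (\ref{estimate:ErrorEstimateExactHalleyMethod}). Here $h''=f''$ is affine, so $D^-f''(t^*)=f''{}'(t^*)=L$; also $f''(t^*)=\eta+Lt^*$ and $-f'(t^*)=1-\eta t^*-\tfrac{L}{2}t^{*2}=1-\eta t^*-Lt^{*2}/2$. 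Substituting these into $\tfrac13\,f''(t^*)^2/f'(t^*)^2+\tfrac29\,D^-f''(t^*)/(-f'(t^*))$ and putting over the common denominator $9(1-\eta t^*-Lt^{*2}/2)^2$ yields exactly the constant $\dfrac{3(\eta+Lt^*)^2+2L(1-\eta t^*-Lt^{*2}/2)}{9(1-\eta t^*-Lt^{*2}/2)^2}$ in the claimed Q-cubic estimate.

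The main obstacle is the bookkeeping in the verification of (A3): one must show carefully that the sign condition $f(r_1)<0$ is equivalent to (\ref{criterion:LipschitzConditionConvergenceCriterion}), which requires simplifying $f(r_1)$ using the identity $\eta r_1+\tfrac{L}{2}r_1^2=1$ (i.e.\ $f'(r_1)=0$) to reduce the cubic value to a manageable closed form, and then recognizing the resulting threshold as $b$. Everything else is a direct substitution into the already-established general theorems, with no further analytic difficulty.
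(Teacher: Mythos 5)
Your proposal is correct and follows essentially the same route as the paper: the paper gives no separate proof of this theorem, but obtains it exactly as you do, by checking that the cubic $f$ in (\ref{majorizingfunction:cubicfunction}) is a majorizing function (the majorant condition reducing to the affine covariant Lipschitz condition, (A1)--(A2) immediate, and (A3) equivalent to the criterion (\ref{criterion:LipschitzConditionConvergenceCriterion})) and then specializing Theorems \ref{theorem:SemilocalConvergenceHalleyMethodMajorantCondition}, \ref{theorem:ConvergenceRate} and \ref{theorem:UniquenessSolution} with $\rho=t^{**}$, $D^-f''(t^*)=L$, $f''(t^*)=\eta+Lt^*$ and $-f'(t^*)=1-\eta t^*-Lt^{*2}/2$. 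Your verification that $f(r_1)<0$ is equivalent to $\beta<b$ supplies a computation the paper merely asserts.
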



\subsection{Convergence results under the $\gamma$-condition}

The notion of the $\gamma$-condition (see Definition
\ref{definition:GammaCondition}) for operators in Banach spaces was
introduced in \cite{WangHan1997} by Wang and Han to study Smale's
point estimate theory. In this subsection, we will give the
semilocal convergence results for Halley's method
(\ref{iteration:HalleyMethod}) under the $\gamma$-condition. As we
will discuss, these convergence results can be applied to Smale's
condition (see \cite{Smale1986} for more details about the Smale's
condition).

Smale \cite{Smale1986} studied the convergence and error estimation
of Newton's method (\ref{iteration:NewtonMethod}) under the
hypotheses that $F$ is analytic and satisfies
\begin{equation}
\label{condition:SmaleCondition}
\left\|F'(x_0)^{-1}F^{(n)}(x_0)\right\| \leq n!\gamma^{n-1},\ \ \ n
\geq 2,
\end{equation}
where $x_0$ is a given point in $D$ and $\gamma$ is defined by
\begin{equation}
\label{constant:GammaSmale} \gamma := \sup_{k >
1}\left\|\frac{F'(x_0)^{-1}F^{(k)}(x_0)}
{k!}\right\|^{\frac{1}{k-1}}.
\end{equation}
Wang and Han in \cite{WangHan1990} completely improved Smale's
results by introducing a majorizing function
\begin{equation}
\label{majorizingfunction:GammaCondition} f(t) = \beta - t +
\frac{\gamma t^2}{1 - \gamma t},\ \ \ \gamma > 0, \ 0 \leq t <
\frac{1}{\gamma}.
\end{equation}

If we choose this function as the majorizing function $h$, then we
can see that the majorant condition
(\ref{condition:MajorantCondition}) reduces to the following
condition:
\begin{equation}
\label{condition:SmaleMajorizingCondition} \|F'(x_0)^{-1}[F''(y) -
F''(x)]\| \leq \frac{2\gamma}{(1 - \gamma\|y - x\| - \gamma\|x -
x_0\|)^3} - \frac{2\gamma}{(1 - \gamma\|x - x_0\|)^3},\ \ \gamma >
0,
\end{equation}
where $\|y - x\| + \|x - x_0\| < 1/\gamma$, and that assumptions
(A1) and (A2) are satisfied for $f$. Moreover, if $\alpha :=
\beta\gamma < 3 - 2\sqrt{2}$, then assumption (A3) is satisfied for
$f$. Thus, the concrete forms of Theorem
\ref{theorem:SemilocalConvergenceHalleyMethodMajorantCondition},
Theorem \ref{theorem:ConvergenceRate} and Theorem
\ref{theorem:UniquenessSolution} are given as follows.

\begin{theorem}
\label{theorem:SemilocalConvergenceHalleyMethodSmaleMajorizingCondition}
Let $F:D\subset X \to Y$ be a twice continuously differentiable
nonlinear operator, $D$ open and convex. Assume that there exists a
starting point $x_0\in D$ such that $F'(x_0)^{-1}$ exists, and
satisfies condition $(\ref{condition:SmaleMajorizingCondition})$,
$\|F'(x_0)^{-1}F(x_0)\| \leq \beta$ and $\|F'(x_0)^{-1}F''(x_0)\|
\leq 2\gamma$. If $\alpha := \beta\gamma < 3 - 2\sqrt{2}$, then the
sequence $\{x_k\}$ generated by Halley's method
$(\ref{iteration:HalleyMethod})$ for solving equation
$(\ref{eq:NonlinearOperatorEquation})$ with starting point $x_0$ is
well defined, is contained in $\ball(x_0,t^*)$ and converges to a
point $x^* \in \overline{\ball(x_0,t^*)}$ which is the solution of
equation $(\ref{eq:NonlinearOperatorEquation})$. The limit $x^*$ of
the sequence $\{x_k\}$ is the unique zero of equation
$(\ref{eq:NonlinearOperatorEquation})$ in $\ball(x_0,t^{**})$, where
$t^*$ and $t^{**}$ are given as
\begin{equation}
\label{root:SmaleMajorizingFunctionfRoot} t^* = \frac{1 + \alpha -
\sqrt{(1 + \alpha)^2 - 8\alpha}}{4\gamma} \ \  \textup{and}\ \
t^{**} = \frac{1 + \alpha + \sqrt{(1 + \alpha)^2 -
8\alpha}}{4\gamma},
\end{equation}
respectively. Moreover, the following error bound holds:
\begin{equation}
\label{error:SmaleTypeSemilocalConvergenceErrorBound} \|x^* -
x_{k+1}\| \leq (t^* - t_{k+1})\left(\frac{\|x^* - x_k\|}{t^* -
t_k}\right)^3,\ \ \ k = 0,1,\ldots.
\end{equation}
And the sequence $\{x_k\}$ Q-cubically converges as follows
\begin{equation}
\label{rate:SmaleTypeSemilocalConvergenceRate} \|x^* - x_{k+1}\|
\leq \frac{8\gamma^2}{3[2(1 - \gamma t^*)^2 - 1]^2} \|x^* -
x_k\|^3,\ \ \ k = 0,1,\ldots.
\end{equation}
\end{theorem}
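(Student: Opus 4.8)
The plan is to obtain Theorem~\ref{theorem:SemilocalConvergenceHalleyMethodSmaleMajorizingCondition} as a direct specialization of Theorems~\ref{theorem:SemilocalConvergenceHalleyMethodMajorantCondition}, \ref{theorem:ConvergenceRate}, and \ref{theorem:UniquenessSolution}, with $h$ taken to be the function $f$ in \eqref{majorizingfunction:GammaCondition}. So the entire burden of the proof is to verify that this $f$, under the stated hypotheses, \emph{is} a majorizing function to $F$ at $x_0$ in the sense of Definition~\ref{definition:MajorizingFunction}, i.e.\ that it satisfies (A1)--(A3) and that the majorant/initial conditions \eqref{condition:MajorantCondition}, \eqref{condition:InitialCondition} follow from \eqref{condition:SmaleMajorizingCondition} together with $\|F'(x_0)^{-1}F(x_0)\|\le\beta$ and $\|F'(x_0)^{-1}F''(x_0)\|\le 2\gamma$. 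Then I would read off $t^*$, $t^{**}$ and the error constants by explicit computation.

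First I would compute the derivatives of $f$: $f'(t) = -1 + \gamma t(2-\gamma t)/(1-\gamma t)^2$, $f''(t) = 2\gamma/(1-\gamma t)^3$, $f'''(t) = 6\gamma^2/(1-\gamma t)^4$, all on $[0,1/\gamma)$. From these, $f(0)=\beta>0$, $f''(0)=2\gamma>0$, $f'(0)=-1$, giving (A1); and $f''$ is positive, strictly increasing and convex on $[0,1/\gamma)$ since $f'''>0$ and $f^{(4)}>0$, giving (A2). The initial conditions \eqref{condition:InitialCondition} are immediate from $f(0)=\beta$, $f''(0)=2\gamma$ and the two hypotheses on $F(x_0)$, $F''(x_0)$. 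For the majorant condition one checks that with this $f$, the right-hand side of \eqref{condition:MajorantCondition} is exactly the right-hand side of \eqref{condition:SmaleMajorizingCondition}, because $f''(\|y-x\|+\|x-x_0\|) - f''(\|x-x_0\|) = 2\gamma/(1-\gamma\|y-x\|-\gamma\|x-x_0\|)^3 - 2\gamma/(1-\gamma\|x-x_0\|)^3$; so \eqref{condition:SmaleMajorizingCondition} is precisely \eqref{condition:MajorantCondition} for $h=f$.

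Next I would handle (A3), which is where the quantitative condition $\alpha:=\beta\gamma < 3-2\sqrt2$ enters. Clearing denominators, $f(t)=0$ is equivalent to the quadratic $2\gamma t^2 - (1+\alpha)t + \beta = 0$ (after multiplying $f(t)(1-\gamma t)$ out and simplifying), whose discriminant is $(1+\alpha)^2 - 8\alpha$; this is positive exactly when $\alpha^2 - 6\alpha + 1 > 0$, i.e.\ $\alpha < 3-2\sqrt2$ (the other root $3+2\sqrt2>1$ being irrelevant since $\alpha<1$ is forced). The two roots are then $t^*,t^{**}$ as in \eqref{root:SmaleMajorizingFunctionfRoot}; one verifies $0 < t^* < r_1 := (\text{root of }f') < t^{**} < 1/\gamma$ so that $t^*$ is the smallest zero of $f$ in $(0,1/\gamma)$ and $f'(t^*)<0$ (strict, since the discriminant is strictly positive so $t^*\ne r_1$). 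That gives (A3). Applying Theorems~\ref{theorem:SemilocalConvergenceHalleyMethodMajorantCondition} and \ref{theorem:UniquenessSolution} now yields convergence to $x^*\in\overline{\ball(x_0,t^*)}$, uniqueness in $\ball(x_0,\rho)$ with $\rho = \sup\{t\in[t^*,1/\gamma): f(t)\le 0\} = t^{**}$ (since $f<0$ on $(t^*,t^{**})$ and $f>0$ beyond), and the error bound \eqref{error:SmaleTypeSemilocalConvergenceErrorBound} is just \eqref{estimate:ErrorEstimateHalleyMethod}.

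Finally, the Q-cubic constant in \eqref{rate:SmaleTypeSemilocalConvergenceRate} comes from plugging into \eqref{estimate:ErrorEstimateExactHalleyMethod}. Here $D^-f''(t^*) = f'''(t^*) = 6\gamma^2/(1-\gamma t^*)^4$ since $f''$ is $C^1$, $f''(t^*) = 2\gamma/(1-\gamma t^*)^3$, and $-f'(t^*) = 1 - \gamma t^*(2-\gamma t^*)/(1-\gamma t^*)^2 = (1 - 2(1-\gamma t^*)^2 \cdot(-1) \cdots)$; more cleanly, $-f'(t^*) = [(1-\gamma t^*)^2 - \gamma t^*(2-\gamma t^*)]/(1-\gamma t^*)^2$, and using $2\gamma t^{*2} - (1+\alpha)t^* + \beta = 0$ one simplifies the numerator to $2(1-\gamma t^*)^2 - 1$ divided by $(1-\gamma t^*)^2$. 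Substituting, $\frac13 (f''(t^*)/f'(t^*))^2 + \frac29 (D^-f''(t^*))/(-f'(t^*))$ collapses after routine algebra to $8\gamma^2/\big(3[2(1-\gamma t^*)^2-1]^2\big)$. The only mildly delicate point is this last simplification — keeping track of the relation $2(1-\gamma t^*)^2-1 = (1-\gamma t^*)^2(-f'(t^*))$ and cancelling powers of $(1-\gamma t^*)$ correctly — but it is a finite computation with no conceptual obstacle; everything structural is inherited from Section~\ref{section:SemilocalConvergenceOfHalleyMethod}.
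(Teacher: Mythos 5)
Your proposal is correct and takes essentially the same route as the paper: the paper obtains this theorem by specializing Theorems \ref{theorem:SemilocalConvergenceHalleyMethodMajorantCondition}, \ref{theorem:ConvergenceRate} and \ref{theorem:UniquenessSolution} to the majorizing function $f(t)=\beta-t+\gamma t^{2}/(1-\gamma t)$, checking that \eqref{condition:SmaleMajorizingCondition} is exactly \eqref{condition:MajorantCondition} for $h=f$, that (A1)--(A3) hold when $\alpha<3-2\sqrt{2}$, that the zeros of the quadratic $2\gamma t^{2}-(1+\alpha)t+\beta=0$ give $t^{*},t^{**}$ with $\rho=t^{**}$, and that substituting $f''(t^{*})$, $D^{-}f''(t^{*})=f'''(t^{*})$ and $-f'(t^{*})=[2(1-\gamma t^{*})^{2}-1]/(1-\gamma t^{*})^{2}$ into \eqref{estimate:ErrorEstimateExactHalleyMethod} yields the constant $8\gamma^{2}/\bigl(3[2(1-\gamma t^{*})^{2}-1]^{2}\bigr)$ -- precisely the computations you supply (the paper merely suppresses them). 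A minor remark: the simplification $(1-\gamma t^{*})^{2}-\gamma t^{*}(2-\gamma t^{*})=2(1-\gamma t^{*})^{2}-1$ is a pure algebraic identity and does not actually need the quadratic relation for $t^{*}$, so that step of your argument is even simpler than you indicate.
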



The next result gives a condition easier to be checked than
condition (\ref{condition:MajorantCondition}), provided that the
majorizing function $h$ is thrice continuously differentiable.

\begin{lemma}
\label{lemma:RelationMajorantConditionAndGammaCondition2Order} Let
$F:D\subset X \to Y$ be a thrice continuously differentiable
nonlinear operator, $D$ open and convex. Let $h : [0,R) \to
\mathbb{R}$ be a thrice continuously differentiable function with
convex $h''$. Then $F$ satisfies condition
$(\ref{condition:MajorantCondition})$ if and only if
\begin{equation}
\label{condition:F'(x0)-1F'''(x)} \|F'(x_0)^{-1}F'''(x)\| \leq
h'''(\|x - x_0\|),
\end{equation}
for all $x \in D$ with $\|x - x_0\| < R$.
\end{lemma}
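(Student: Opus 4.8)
The plan is to establish the two implications separately. In both directions the bridge between a bound on $F''(y)-F''(x)$ and a bound on $F'''$ is the integral identity
$$F''(y)-F''(x)=\int_0^1 F'''(x+\tau(y-x))(y-x)\,\dif\tau,$$
paired with the scalar identity $h''(b)-h''(a)=\int_a^b h'''(s)\,\dif s$; the one structural fact I would record at the outset is that convexity of $h''$ (together with $h\in C^3$) is precisely the statement that $h'''$ is nondecreasing on $[0,R)$.

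For the implication \eqref{condition:F'(x0)-1F'''(x)}$\,\Rightarrow\,$\eqref{condition:MajorantCondition}, fix $x,y$ with $\|y-x\|+\|x-x_0\|<R$. By convexity of $D$ the segment $\{x+\tau(y-x):\tau\in[0,1]\}$ lies in $D$, and each of its points satisfies $\|x+\tau(y-x)-x_0\|\le\|x-x_0\|+\tau\|y-x\|<R$, so hypothesis \eqref{condition:F'(x0)-1F'''(x)} is applicable there. First I would estimate
$$\|F'(x_0)^{-1}[F''(y)-F''(x)]\|\le\int_0^1\|F'(x_0)^{-1}F'''(x+\tau(y-x))\|\,\|y-x\|\,\dif\tau\le\int_0^1 h'''\bigl(\|x-x_0\|+\tau\|y-x\|\bigr)\,\|y-x\|\,\dif\tau,$$
where the first step is the standard norm estimate for a Banach-space-valued integral and the second uses \eqref{condition:F'(x0)-1F'''(x)} together with the monotonicity of $h'''$. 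The substitution $s=\|x-x_0\|+\tau\|y-x\|$ then turns the last integral into $h''(\|y-x\|+\|x-x_0\|)-h''(\|x-x_0\|)$, which is exactly \eqref{condition:MajorantCondition}.

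For the converse, fix $x\in D$ with $r:=\|x-x_0\|<R$ and an arbitrary $u\in X$ with $\|u\|\le 1$. Since $D$ is open and $r<R$, for all sufficiently small $t>0$ we have $x+tu\in D$ and $t+r<R$, so \eqref{condition:MajorantCondition} applied with $y=x+tu$ yields $\|F'(x_0)^{-1}[F''(x+tu)-F''(x)]\|\le h''(r+t)-h''(r)$. Dividing by $t$ and letting $t\downarrow 0$, the left-hand side converges to $\|F'(x_0)^{-1}F'''(x)u\|$ by differentiability of $F''$ and boundedness of $F'(x_0)^{-1}$, while the right-hand side converges to $h'''(r)$; hence $\|F'(x_0)^{-1}F'''(x)u\|\le h'''(\|x-x_0\|)$ for every such $u$, and taking the supremum over $\|u\|\le 1$ gives \eqref{condition:F'(x0)-1F'''(x)}.

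I do not expect a deep obstacle: the two arguments are a differentiation and an integration of the same identity. The steps needing care are (i) checking that all intermediate points $x+\tau(y-x)$ remain in $D\cap\ball(x_0,R)$ so that the respective hypotheses may be invoked — this is where the strict inequality $\|y-x\|+\|x-x_0\|<R$ and the openness/convexity of $D$ enter — and (ii) making explicit that convexity of $h''$ forces $h'''$ to be nondecreasing, which is what legitimizes replacing $h'''(\|x+\tau(y-x)-x_0\|)$ by $h'''(\|x-x_0\|+\tau\|y-x\|)$ in the forward direction.
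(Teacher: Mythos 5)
Your proposal is correct and takes essentially the same route as the paper: the implication from \eqref{condition:F'(x0)-1F'''(x)} to \eqref{condition:MajorantCondition} is exactly the paper's integral estimate, including the point you flag that convexity of $h''$ (i.e.\ monotonicity of $h'''$) is what permits replacing $h'''(\|x+\tau(y-x)-x_0\|)$ by $h'''(\|x-x_0\|+\tau\|y-x\|)$. The only difference is that the paper dismisses the other direction as holding ``trivially,'' while you spell out the standard difference-quotient argument (apply the majorant condition with $y=x+tu$, divide by $t$, let $t\downarrow 0$, take the supremum over $\|u\|\le 1$); this is a harmless elaboration of the same idea, not a different approach.
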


\begin{proof}
If $F$ satisfies (\ref{condition:MajorantCondition}), then
(\ref{condition:F'(x0)-1F'''(x)}) holds trivially. Conversely, if
$F$ satisfies (\ref{condition:F'(x0)-1F'''(x)}), then we have
\begin{eqnarray*}
\|F'(x_0)^{-1}[F''(y) - F''(x)]\| &\leq& \int_0^1
\|F'(x_0)^{-1}F'''(x +
\tau (y - x))\| \|y - x\| \dif\tau\\
&\leq& \int_0^1 h'''(\|x - x_0\| + \tau\|y - x\|)\|y - x\|
\dif\tau\\
&=& h''(\|y - x\| + \|x - x_0\|) - h''(\|x - x_0\|),
\end{eqnarray*}
which implies that $F$ satisfies
(\ref{condition:MajorantCondition}). The proof is complete.
\end{proof}

If the majorizing function $h$ is defined by
(\ref{majorizingfunction:GammaCondition}), then
(\ref{condition:F'(x0)-1F'''(x)}) becomes
\begin{equation}
\label{condition:GammaCondition2Order} \|F'(x_0)^{-1}F'''(x)\| \leq
\frac{6\gamma^2}{(1 - \gamma\|x - x_0\|)^4},
\end{equation}
which means that $F$ satisfies the $\gamma$-condition with 2-order
(see Definition \ref{definition:GammaCondition}) in $\ball(x_0,R)$.
By \cite{WangHan1997}, if $F$ satisfies the $\gamma$-condition with
2-order, then $F$ satisfies the $\gamma$-condition (with 1-order).

One typical and important class of examples satisfying the
$\gamma$-condition with 2-order
(\ref{condition:GammaCondition2Order}) is the one of analytic
functions. The following lemma shows that an analytic operator
satisfies the $\gamma$-condition with 2-order.

\begin{lemma}
\label{lemma:RelationGammaConditionAndAnalyticOperator} Let $F : D
\to Y$ be an analytic nonlinear operator. Suppose that $x_0 \in D$
is a given point, $F'(x_0)$ is invertible and that
$\ball(x_0,1/\gamma) \subset D$. Then $F$ satisfies the
$\gamma$-condition with 2-order
$(\ref{condition:GammaCondition2Order})$ in $\ball(x_0,1/\gamma)$,
where $\gamma$ is defined by $(\ref{constant:GammaSmale})$.
\end{lemma}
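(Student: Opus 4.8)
The plan is to use the analyticity of $F$ at $x_0$ to expand $F'''$ as an operator-valued power series centered at $x_0$, and then to majorize that series term by term using the definition of $\gamma$ in $(\ref{constant:GammaSmale})$. First I would record the immediate consequence of $(\ref{constant:GammaSmale})$: for every integer $k > 1$ one has $\|F'(x_0)^{-1}F^{(k)}(x_0)\| \leq k!\,\gamma^{k-1}$, hence $\limsup_{k\to\infty}\|F'(x_0)^{-1}F^{(k)}(x_0)/k!\|^{1/k} \leq \gamma$, so the scalar series $\sum_{k\geq 0}\|F'(x_0)^{-1}F^{(k)}(x_0)/k!\|\,t^k$ converges for $t\in[0,1/\gamma)$. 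Since $F$ is analytic, on $\ball(x_0,1/\gamma)$ its Taylor expansion converges and may be differentiated termwise three times inside the radius of convergence, giving
\begin{equation*}
F'''(x) = \sum_{n=0}^{\infty}\frac{1}{n!}F^{(n+3)}(x_0)(x-x_0)^n,
\end{equation*}
where $F^{(n+3)}(x_0)(x-x_0)^n$ denotes the trilinear operator obtained by feeding $n$ copies of $x-x_0$ into the $(n+3)$-linear operator $F^{(n+3)}(x_0)$.

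Next I would compose with $F'(x_0)^{-1}$, apply the triangle inequality to the (absolutely convergent) series, and insert the bound above with $k=n+3$:
\begin{equation*}
\|F'(x_0)^{-1}F'''(x)\| \leq \sum_{n=0}^{\infty}\frac{1}{n!}\|F'(x_0)^{-1}F^{(n+3)}(x_0)\|\,\|x-x_0\|^n \leq \sum_{n=0}^{\infty}\frac{(n+3)!}{n!}\,\gamma^{n+2}\,\|x-x_0\|^n.
\end{equation*}
Writing $\tau=\gamma\|x-x_0\|<1$ and factoring out $\gamma^2$, the right-hand side equals $\gamma^2\sum_{n\geq 0}(n+1)(n+2)(n+3)\tau^n$. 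Finally I would identify this as the third derivative of $t\mapsto(1-t)^{-1}$, i.e. $\sum_{n\geq 0}(n+1)(n+2)(n+3)\tau^n = 6/(1-\tau)^4$, which yields $\|F'(x_0)^{-1}F'''(x)\| \leq 6\gamma^2/(1-\gamma\|x-x_0\|)^4$, that is $(\ref{condition:GammaCondition2Order})$ on $\ball(x_0,1/\gamma)$.

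The only delicate point is the justification of the termwise differentiation of the operator-valued Taylor series and of the interchange of norm and summation; both are standard for analytic maps between Banach spaces once one knows, from the $\gamma$ bound, that the relevant scalar majorant series converges on $[0,1/\gamma)$. Apart from that, the argument is just the bookkeeping of multilinear operators together with the elementary power-series identity for $6/(1-\tau)^4$, so I expect no substantial obstacle.
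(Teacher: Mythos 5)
Your proposal is correct and follows essentially the same route as the paper: expand $F'(x_0)^{-1}F'''(x)$ as the termwise-differentiated Taylor series at $x_0$, bound each coefficient by $(n+3)!\gamma^{n+2}/n!$ via the definition of $\gamma$ in $(\ref{constant:GammaSmale})$, and sum $\sum_{n\geq 0}(n+1)(n+2)(n+3)\tau^n = 6/(1-\tau)^4$ for $\tau = \gamma\|x-x_0\| < 1$. Your explicit remarks on the convergence of the scalar majorant series and the justification of termwise differentiation are a slight elaboration of steps the paper leaves implicit, but the substance is identical.
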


\begin{proof}
For any $x \in D$, since $F$ is an analytic operator, we have
$$
F'(x_0)^{-1}F'''(x) = \sum_{n=0}^\infty \frac{1}{n!}
F'(x_0)^{-1}F^{(n + 3)}(x_0)(x - x_0)^n.
$$
This together with (\ref{constant:GammaSmale}) directly leads to
$$
\|F'(x_0)^{-1}F'''(x)\| \leq \gamma^2 \sum_{n=0}^\infty
(n+3)(n+2)(n+1)(\gamma\|x - x_0\|)^n.
$$
Noting that $\gamma\|x - x_0\| < 1$ due to the assumption
$\ball(x_0,1/\gamma) \subset D$, we have
$$
\sum_{n=0}^\infty (n+3)(n+2)(n+1)(\gamma\|x - x_0\|)^n =
\frac{6}{(1-\gamma\|x - x_0\|)^4},
$$
which completes the proof.
\end{proof}

From Lemma
\ref{lemma:RelationMajorantConditionAndGammaCondition2Order} and
Lemma \ref{lemma:RelationGammaConditionAndAnalyticOperator}, we
conclude that the semilocal convergence results obtained in Theorem
\ref{theorem:SemilocalConvergenceHalleyMethodSmaleMajorizingCondition}
also hold when $F$ is an analytic operator.

\begin{theorem}
\label{theorem:SemilocalConvergenceForAnalyticOperator} Let $F: D
\to F$ be an analytic operator, $D$ open and convex. Assume that
exists $x_0 \in D$ such that $F'(x_0)$ is nonsingular. If
$\|F'(x_0)^{-1}F(x_0)\| \leq \beta$ and $\alpha := \beta\gamma < 3 -
2\sqrt{2}$, where $\gamma$ is given by
$(\ref{constant:GammaSmale})$. Then the sequence $\{x_k\}$ generated
by Halley's method $(\ref{iteration:HalleyMethod})$ for solving
equation $(\ref{eq:NonlinearOperatorEquation})$ with starting point
$x_0$ is well defined, is contained in $\ball(x_0,t^*)$ and
converges to a point $x^* \in \overline{\ball(x_0,t^*)}$ which is
the solution of equation $(\ref{eq:NonlinearOperatorEquation})$. The
limit $x^*$ of $\{x_k\}$ is the unique zero of equation
$(\ref{eq:NonlinearOperatorEquation})$ in $\ball(x_0,t^{**})$, where
$t^*$ and $t^{**}$ are given in
$(\ref{root:SmaleMajorizingFunctionfRoot})$. Moreover, the error
estimate and the convergence rate for $\{x_k\}$ are characterized by
$(\ref{error:SmaleTypeSemilocalConvergenceErrorBound})$ and
$(\ref{rate:SmaleTypeSemilocalConvergenceRate})$, respectively.
\end{theorem}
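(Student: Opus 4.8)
The plan is to deduce this theorem directly from Theorem~\ref{theorem:SemilocalConvergenceHalleyMethodSmaleMajorizingCondition}: under the stated analyticity hypothesis and $\alpha=\beta\gamma<3-2\sqrt{2}$, one checks that every assumption of that theorem is met, with the majorizing function taken to be $h=f$, where $f$ is the function $(\ref{majorizingfunction:GammaCondition})$ on $[0,1/\gamma)$. (We may assume $\beta>0$, the case $\beta=0$ forcing $F(x_0)=0$ and being trivial, and we also take $\ball(x_0,1/\gamma)\subset D$.) Note that $f$ is thrice continuously differentiable with $f'''(t)=6\gamma^2/(1-\gamma t)^4>0$, so $f''$ is convex and strictly increasing; together with $f(0)=\beta>0$, $f'(0)=-1$, $f''(0)=2\gamma>0$ this gives assumptions (A1) and (A2). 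Assumption (A3) follows from $\alpha<3-2\sqrt{2}$ exactly as in the derivation of Theorem~\ref{theorem:SemilocalConvergenceHalleyMethodSmaleMajorizingCondition}: clearing denominators in $f(t)=0$ gives $2\gamma t^2-(1+\alpha)t+\beta=0$, whose discriminant $(1+\alpha)^2-8\alpha$ is positive precisely when $\alpha<3-2\sqrt{2}$; the two roots $t^*<t^{**}$ are then the numbers in $(\ref{root:SmaleMajorizingFunctionfRoot})$, both lie in $(0,1/\gamma)$ (for $t^{**}$ this amounts to $1+\alpha+\sqrt{(1+\alpha)^2-8\alpha}<4$, i.e.\ to $1<9$), and $f'(t^*)<0$ since $t^*$ is a simple root with $f<0$ on $(t^*,t^{**})$.

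Next I would verify the operator-side hypotheses of Theorem~\ref{theorem:SemilocalConvergenceHalleyMethodSmaleMajorizingCondition}. Since $F$ is analytic with $F'(x_0)$ invertible, Lemma~\ref{lemma:RelationGammaConditionAndAnalyticOperator} gives the $\gamma$-condition with $2$-order $(\ref{condition:GammaCondition2Order})$ on $\ball(x_0,1/\gamma)$; and because $f$ is thrice continuously differentiable with convex $f''$, Lemma~\ref{lemma:RelationMajorantConditionAndGammaCondition2Order} applied with $h=f$ converts $(\ref{condition:GammaCondition2Order})$ into $(\ref{condition:SmaleMajorizingCondition})$, which is precisely the majorant condition $(\ref{condition:MajorantCondition})$ for $h=f$. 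For the initial conditions of Definition~\ref{definition:MajorizingFunction}, the bound $\|F'(x_0)^{-1}F(x_0)\|\le\beta=f(0)$ is assumed, while $\|F'(x_0)^{-1}F''(x_0)\|\le 2\gamma=f''(0)$ is the $n=2$ instance of the Smale inequality $(\ref{condition:SmaleCondition})$, which is immediate from the definition $(\ref{constant:GammaSmale})$ of $\gamma$. Hence $f$ is a majorizing function to $F$ at $x_0$, and all hypotheses of Theorem~\ref{theorem:SemilocalConvergenceHalleyMethodSmaleMajorizingCondition} hold.

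The conclusion of that theorem then gives everything asserted here: $\{x_k\}$ is well defined, stays in $\ball(x_0,t^*)$, and converges to a zero $x^*\in\overline{\ball(x_0,t^*)}$ of $(\ref{eq:NonlinearOperatorEquation})$; uniqueness holds in $\ball(x_0,t^{**})$, using that the uniqueness radius $\rho=\sup\{t\in[t^*,1/\gamma):f(t)\le 0\}$ from Theorem~\ref{theorem:UniquenessSolution} equals $t^{**}$ because $f\le 0$ exactly on $[t^*,t^{**}]$ within $[0,1/\gamma)$; the error bound is $(\ref{error:SmaleTypeSemilocalConvergenceErrorBound})$; and the Q-cubic rate is $(\ref{rate:SmaleTypeSemilocalConvergenceRate})$, obtained from the general constant in $(\ref{estimate:ErrorEstimateExactHalleyMethod})$ by inserting $h''(t^*)=2\gamma/(1-\gamma t^*)^3$, $-h'(t^*)=[2(1-\gamma t^*)^2-1]/(1-\gamma t^*)^2$ and $D^-h''(t^*)=h'''(t^*)=6\gamma^2/(1-\gamma t^*)^4$ and simplifying.

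There is no genuine difficulty in this argument: its whole substance is already carried by Lemma~\ref{lemma:RelationGammaConditionAndAnalyticOperator}, Lemma~\ref{lemma:RelationMajorantConditionAndGammaCondition2Order} and Theorem~\ref{theorem:SemilocalConvergenceHalleyMethodSmaleMajorizingCondition}. The only points needing a little care are bookkeeping ones: confirming that $f$ really is defined and satisfies (A1)--(A3) on $[0,1/\gamma)$ and that $t^{**}<1/\gamma$, so that the conclusions of the two lemmas and the hypotheses (A1)--(A3) are mutually compatible, and carrying out the short algebraic reduction of the cubic-rate constant noted above, which I would not spell out in full.
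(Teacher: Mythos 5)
Your proposal is correct and follows essentially the same route as the paper, which deduces this theorem in one step by combining Lemma \ref{lemma:RelationGammaConditionAndAnalyticOperator} (analyticity implies the $\gamma$-condition with 2-order) and Lemma \ref{lemma:RelationMajorantConditionAndGammaCondition2Order} (which turns that into the majorant condition for $h=f$) with Theorem \ref{theorem:SemilocalConvergenceHalleyMethodSmaleMajorizingCondition}. Your write-up merely spells out the bookkeeping the paper leaves implicit --- verification of (A1)--(A3), the roots $t^*,t^{**}$, the uniqueness radius $\rho=t^{**}$, and the simplification of the cubic-rate constant --- all of which check out.
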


\section{Remarks and Numerical Example}

All the well-known one-point iterative methods with third-order of
convergence are given by the following unified form (see
\cite{Hernandez2005,Hernandez2009} for more details):
\begin{equation}
\label{iteration:IterativeFamilyMethods} \left\{
\begin{array}{l}
x_{n + 1} = x_n - H(L_F(x_n)) F'(x_n)^{-1} F(x_n),\\
H(L_F(x_n)) = \I + \frac{1}{2} L_F(x_n) + \sum_{k \geq 2} a_k
L_F(x_n)^k,\\
L_F(x_n) = F'(x_n)^{-1} F''(x_n)F'(x_n)^{-1}F(x_n), \ \ \ n \in
\mathbb{N},
\end{array} \right.
\end{equation}
where $\{a_k\}_{k \geq 2}$ is a nonnegative and nonincreasing real
sequence such that
$$
\sum_{k = 0}^\infty a_k t^k < + \infty, \ \  t \in [- \frac{1}{2},
\frac{1}{2}] \ \ \ \text{with} \ \ a_0 = 1, a_1 = \frac{1}{2}.
$$
Thus, if $L_F(x_n)$ exists and $\|L_F(x_n)\| \leq 1/2$, then
(\ref{iteration:IterativeFamilyMethods}) is well defined. In
particular, when $a_k = 1/2^k$ for any $k \geq 0$,
(\ref{iteration:IterativeFamilyMethods}) reduces to Halley's method
(\ref{iteration:HalleyMethod}).

Hern\'{a}ndez and Romero in \cite{Hernandez2009} studied the
semilocal convergence of (\ref{iteration:IterativeFamilyMethods})
under the following condition:
\begin{equation}
\label{condition:MajorantConditionLike} \|F''(x) - F''(y)\| \leq
|p''(u) - p''(v)|, \ \ x, y \in D, u, v \in [a, s] \ \text{such
that} \ \|x - y\| \leq |u - v|,
\end{equation}
where $p$ is a sufficiently differentiable nonincreasing and convex
real function in an interval $[a, b]$ such that $p(a) > 0 > p(b)$
and $p'''(t) \geq 0$ in $[a, s]$, and $s$ is the unique simple
solution of $p(t) = 0$ in $[a, b]$.

We point out that condition (\ref{condition:MajorantCondition}) used
in our convergence analysis is affine invariant but not condition
(\ref{condition:MajorantConditionLike}) (see \cite{Deuflhard1979,
Deuflhard2004} for more details about the affine invariant theory),
and that the assumptions of the majorizing function used in our
analysis are weaker than the ones in \cite{Hernandez2009}.
Furthermore, our convergence analysis provides a clear relationship
between the majorizing function and the nonlinear operator, see
Lemmas \ref{lemma:estimateF'(x)-1F'(x0)},
\ref{lemma:estimateF'(x0)-1F''(x)} and
\ref{lemma:ConvergenceAuxiliaryResults}.

To illustrate the theoretical results, we provide a numerical
example on nonlinear Hammerstein integral equation of the second
kind. Consider the integral equation:
\begin{equation}
\label{eq:NonlinearHammersteinEquation} u(s) = f(s) + \lambda
\int_a^b k(s, t) u(t)^n \dif t, \ \ \lambda \in \mathbb{R}, n \in
\mathbb{N},
\end{equation}
where $f$ is a given continuous function satisfying $f(s) > 0$ for
$s \in [a, b]$ and the kernel function $k(s, t)$ is continuous and
positive in $[a, b] \times [a, b]$. Let $X = Y = C[a, b]$ and $D =
\{u \in D[a, b]: u(s) \geq 0, s \in [a, b]\}$. Then, finding a
solution of (\ref{eq:NonlinearHammersteinEquation}) is equivalent to
find a solution of $F(x) = 0$, where $F: D \to C[a, b]$ is defined
by
$$
F(u)(s) = u(s) - f(s) - \lambda \int_a^b k(s, t) u(t)^n \dif t, \ \
s \in [a, b], \lambda \in \mathbb{R}, n \in \mathbb{N}.
$$
We adopt the max-norm. The first and second derivative of $F$ are
given by
$$
F'(u)v(s) = v(s) - n \lambda \int_a^b k(s, t) u(t)^{n - 1} v(t) \dif
t, \ \ v \in D,
$$
and
$$
F''(u)[vw](s) = - n(n - 1)\lambda \int_a^b k(s, t)u(t)^{n - 2}
(vw)(t) \dif t, \ \ v, w \in D.
$$

We choose $[a, b] = [0, 1], n = 3, x_0(t) = f(t) = 1$ and $k(s, t)$
as the Green kernel on $[0,1] \times [0, 1]$ defined by
$$
G(s,t)=
\begin{cases}
\displaystyle \frac{(b - s)(t - a)}{b - a} = t(1 - s),\ t \leq s,\\
\displaystyle \frac{(b - t)(s - a)}{b - a} = s(1 - t),\ s \leq t.
\end{cases}
$$
Let $M = \max\limits_{s \in [0, 1]} \int_0^1 |k(s, t)| \dif t$. Then
$M = 1/8$. Thus, we obtain that
$$
\|F'(x_0)^{-1}\| \leq \frac{8}{8 - 3 |\lambda|}, \ \
\|F'(x_0)^{-1}F(x_0)\| \leq \frac{|\lambda|}{8 - 3 |\lambda|}, \ \
\|F'(x_0)^{-1}F''(x_0)\| \leq \frac{6 |\lambda|}{8 - 3 |\lambda|}.
$$
In addition, for any $x, y \in D$, we have
$$
\|F'(x_0)^{-1}[F''(x) - F''(y)]\| \leq \frac{6 |\lambda|}{8 - 3
|\lambda|} \|x - y\|.
$$
So, we obtain the values of $\beta, \eta$ and $L$ in
(\ref{majorizingfunction:cubicfunction}) as follows:
$$
\beta = \frac{|\lambda|}{8 - 3 |\lambda|}, \ \ \eta = \frac{6
|\lambda|}{8 - 3 |\lambda|}, \ \ L = \frac{6 |\lambda|}{8 - 3
|\lambda|}.
$$
Consequently, the convergence criterion
(\ref{criterion:LipschitzConditionConvergenceCriterion}) holds for
any $|\lambda| \in [0, 32/27)$, and Theorem
\ref{theorem:SemilocalConvergenceHalleyMethodLipschitzCondition} is
applicable and the sequence generated by Halley's method
(\ref{iteration:HalleyMethod}) with initial point $x_0$ converges to
a zero of $F$ defined by (\ref{eq:NonlinearHammersteinEquation}).

For the special cases of integral equation
(\ref{eq:NonlinearHammersteinEquation}) with $n = 3$ when $\lambda =
1/4, 1/2, 3/4, 1$ and $f(t) = 1$, the corresponding domains of
existence and uniqueness of solution, together with those obtained
by Hern\'{a}ndez and Romero in \cite{Hernandez2007}, are given in
Table \ref{table:DomainExistenceUniqueness}. We notice that our
convergence analysis gives better existence balls and uniqueness
fields than those in \cite{Hernandez2007}.

\begin{table}
\centering \caption{Domains of existence and uniqueness of solution
for Halley's method} \label{table:DomainExistenceUniqueness}
\begin{tabular}{lllll}
\hline
$f(t) = 1$ & & & \multicolumn{2}{l}{Hern\'{a}ndez and Romero \cite{Hernandez2007}}\\
\cline{1-1} \cline{4-5} $\lambda$ & Existence & Uniqueness &
Existence & Uniqueness \\
\hline
0.25 & $\overline{\ball(1, 0.0346081)}$ & \ball(1, 4.06814) &
$\overline{\ball(1, 0.0348595)}$ & \ball(1, 4.06798) \\
0.5 & $\overline{\ball(1, 0.0783777)}$ & \ball(1, 2.35026) &
$\overline{\ball(1, 0.0814400)}$ & \ball(1, 2.34809) \\
0.75 & $\overline{\ball(1, 0.138260)}$ & \ball(1, 1.54454) &
$\overline{\ball(1, 0.157580)}$ & \ball(1, 1.52953) \\
1 & $\overline{\ball(1, 0.236068)}$ & \ball(1, 1) &
$\overline{\ball(1, 0.402436)}$ & \ball(1, 0.853166)\\
\hline
\end{tabular}
\end{table}

%


\end{document}